 \numberwithin{dummy}{section}
\newtheorem{algorithm}{Weak Galerkin Algorithm}
\newcommand{\bu}{{\bf u}}
\newcommand{\bq}{{\bf q}}
\newcommand{\bw}{{\bf w}}
\newcommand{\bx}{{\bf x}}
\newcommand{\be}{{\bf e}}
\newcommand{\bv}{{\bf v}}
\newcommand{\bH}{{\textbf{\textit{H}}}}
\newcommand{\bpsi}{{\boldsymbol\psi}}
\newcommand{\bepsilon}{{\boldsymbol\epsilon}}
\def\Q{{\mathbb Q}}
\def\T{{\mathcal T}}
\def\E{{\mathcal E}}
\def\pT{{\partial T}}
\def\l{{\langle}}
\def\r{{\rangle}}
\def\T{{\mathcal T}}
\def\E{{\mathcal E}}
\def\bbf{{\bf f}}
\def\bg{{\bf g}}
\def\bn{{\bf n}}
\def\bq{{\bf q}}
\def\3bar{{|\hspace{-.02in}|\hspace{-.02in}|}}
  \def\b#1{\mathbf{#1}} 
\def\a#1{\begin{align*}#1\end{align*}} \def\an#1{\begin{align}#1\end{align}} 
\def\p#1{\begin{pmatrix}#1\end{pmatrix}}
\title{A stabilizer-free pressure-robust finite element method for the Stokes equations }
\author{Xiu Ye\thanks{Department of
Mathematics, University of Arkansas at Little Rock, Little Rock, AR
72204 (xxye@ualr.edu). This research was supported in part by
National Science Foundation Grant DMS-1620016.}
\and
Shangyou Zhang\thanks{Department of
Mathematical Sciences, University of Delaware, Newark, DE 19716 (szhang@udel.edu).}
}
\begin{document}
\maketitle

\begin{abstract}
In this paper, we introduce a new finite element method for solving the Stokes equations in the primary velocity-pressure formulation.
This method employs $H(div)$ finite elements to approximate velocity, which leads to two unique advantages: exact divergence free velocity field and pressure-robustness. In addition, this method has a simple formulation without any stabilizer or penalty term.  Optimal-order error estimates are
established for the corresponding numerical approximation in various
norms. Extensive numerical investigations are conducted to test accuracy and robustness of the method and to confirm the theory. The numerical examples cover low and high order approximations up to the degree four, and 2D and 3D cases.
\end{abstract}

\begin{keywords}
Weak gradient, finite element methods, the Stokes equations, pressure-robust.
\end{keywords}

\begin{AMS}
Primary, 65N15, 65N30, 76D07; Secondary, 35B45, 35J50
\end{AMS}
\pagestyle{myheadings}

\section{Introduction}
In this paper, we solve the Stokes problem which
seeks unknown functions $\bu$ and $p$ satisfying
\begin{eqnarray}
-\mu\Delta\bu+\nabla p&=& \bbf\quad
\mbox{in}\;\Omega,\label{moment}\\
\nabla\cdot\bu &=&0\quad \mbox{in}\;\Omega,\label{cont}\\
\bu&=&0\quad \mbox{on}\; \partial\Omega,\label{bc}
\end{eqnarray}
where $\mu$ denotes the fluid viscosity and $\Omega$ is a polygonal or polyhedral domain in
$\mathbb{R}^d\; (d=2,3)$.

The weak form in the primary velocity-pressure formulation for the
Stokes problem (\ref{moment})--(\ref{bc}) seeks $\bu\in
\bH_0^1(\Omega)$ and $p\in L_0^2(\Omega)$ satisfying
\begin{eqnarray}
(\mu\nabla\bu,\nabla\bv)-(\nabla\cdot\bv, p)&=&({\bf f}, \bv),\label{w1}\\
(\nabla\cdot\bu, q)&=&0,\label{w2}
\end{eqnarray}
for all $\bv\in \bH_0^1(\Omega)$ and $q\in L_0^2(\Omega)$.

The Stokes equations have many applications in fluid dynamics and been studied extensively by  researchers. For examples, finite element methods in the primary velocity-pressure formulation have been investigated in \cite{cr,gr} for continuous velocity approximations and in \cite{sst} for totally discontinuous velocity fields. In \cite{wwy,wyhdiv}, a $H(div)$ finite element method is proposed for the Stokes equations, i.e. the velocity is approximated by $H(div)$ finite element functions. There are two advantages of the method. First,  the numerical solution satisfies the divergence free condition exactly. Secondly, it is a pressure-robust discretization \cite{John}. Since the $H(div)$ finite element is discontinuous, a stabilizer with a penalty parameter is required in the formulation. The penalty parameters in \cite{wwy,wyhdiv} need to be large enough to ensure the well posedness of the problem.

The weak Galerkin (WG) finite element method is an effective and robust numerical technique for the approximate solution of partial differential equations, introduced in \cite{wy,wymix}. The WG methods use discontinuous piecewise polynomials as approximation on polytopal meshes.   The main idea of weak Galerkin finite
element methods is the use of weak functions and their corresponding weak derivatives in algorithm design. With the introduction of weak derivative, the WG method has a simple formulation and  no need to tune penalty parameters. The Stokes problems and the coupled Stoke-Darcy problems have been studied by the weak Galerkin methods  in \cite{cww,cfx,lllc,wy-stokes} and by the modified weak Galerkin methods in \cite{mwy,tzz}.

In this paper, we develop a new finite element method for solving the Stokes equations in (\ref{moment})-(\ref{bc}). Like the finite element method in \cite{wyhdiv}, we use $H(div)$ finite  element for velocity. As a result, our new finite element method has exact divergence free velocity field and is pressure-robust. Owe to the introduction of weak gradient in  \cite{mwy}, this new finite element method has the following simple formulation: seek $\bu_h\in V_h$ and $p_h\in W_h$ satisfying
\begin{eqnarray}
(\mu\nabla_w\bu_h,\nabla_w\bv)-(\nabla\cdot\bv, p_h)&=&({\bf f},\bv)\quad\forall \bv\in V_h ,\label{ww1}\\
(\nabla\cdot\bu_h, q)&=&0\quad\quad\forall q\in W_h,\label{ww2}
\end{eqnarray}
where $\nabla_w$ is the weak gradient which will be defined later. Unlike the $H(div)$ finite element method in \cite{wyhdiv}, there are no stabilizers nor penalty parameters in our new finite element formulations (\ref{ww1})-(\ref{ww2}).

The optimal order error estimates are established for the corresponding finite element approximations for velocity and pressure. Our theory and numerical tests demonstrate the pressure-robustness of the method.  Extensive numerical examples are tested for the finite elements with different degrees up to $P_4$ polynomials and for different dimensions, 2D and 3D.

\section{ Finite Element Method}
We use standard definitions for the Sobolev spaces $H^s(D)$ and their associated
inner products $(\cdot, \cdot)_{s,D}$, norms $\|\cdot\|_{s,D}$, and seminorms $|\cdot|_{s,D}$ for $s\ge 0$.
When $D = \Omega$, we drop the subscript $D$ in the norm and inner product notation.
 We also use $L^2_0(\Omega)$ to denote the subspace of $L^2(\Omega)$ consisting of functions with mean value zero.

Let ${\cal T}_h$ be a   shape regular partition of the domain $\Omega$
with mesh size $h$ that consists of triangles/tetrahedrons. Denote by ${\cal E}_h$ the set
of all  flat faces in ${\cal T}_h$, and let ${\cal
E}_h^0={\cal E}_h\backslash\partial\Omega$ be the set of all
interior faces.

For $k\ge 1$ and given $\T_h$, define two finite element spaces  for velocity
\begin{eqnarray}
V_h &=&\left\{ \bv\in H({\rm div},\Omega):\ \bv|_{T}\in [P_{k}(T)]^d,\;\forall T\in\T_h,\;\bv\cdot\bn|_{\partial\Omega}=0 \right\}.\label{vh}
\end{eqnarray}
and for pressure
\begin{equation}
W_h =\left\{q\in L_0^2(\Omega): \ q|_T\in P_{k-1}(T)\right\}.\label{wh}
\end{equation}

Let $T_1$ and $T_2$ be two triangles/tetrahedrons in $\T_h$
sharing $e\in\E_h$.  For $e\in\E_h$ and $\bv\in V_h+\bH_0^1(\Omega) $, the jump $[\bv]$ is defined as
\begin{equation}\label{jump}
[\bv]=\bv\quad {\rm if} \;e\subset \partial\Omega,\quad [\bv]=\bv|_{T_1}-\bv|_{T_2}\;\; {\rm if} \;e\in\E_h^0.
\end{equation}
The order of $T_1$ and $T_2$ is not essential.

For $e\in\E_h$ and $\bv\in V_h+\bH_0^1(\Omega)$, the average $\{v\}$ is defined  as
\begin{equation}\label{avg}
\{\bv\}={\bf 0}\quad {\rm if} \;e\subset \partial\Omega,\quad \{\bv\}=\frac12(\bv|_{T_1}+\bv|_{T_2})\;\; {\rm if} \;e\in\E_h^0.
\end{equation}

For a function $\bv\in V_h+\bH_0^1(\Omega)$, its weak gradient $\nabla_w\bv\in \prod_{T\in\T_h} [P_{k+1}(T)]^{d\times d}$ is defined on each $T\in T_h$ by
\begin{equation}\label{wg}
(\nabla_w\bv,\  \tau)_T = -(\bv,\  \nabla\cdot \tau)_T+
\l\{\bv\}, \ \tau\cdot\bn \r_\pT\quad\forall\tau\in [P_{k+1}(T)]^{d\times d}.
\end{equation} 
For simplicity, we adopt the following notations,
\begin{eqnarray*}
(v,w)_{\T_h} &=&\sum_{T\in\T_h}(v,w)_T=\sum_{T\in\T_h}\int_T vw d\bx,\\
 \l v,w\r_{\partial\T_h}&=&\sum_{T\in\T_h} \l v,w\r_\pT=\sum_{T\in\T_h} \int_\pT vw ds.
\end{eqnarray*}
Then we have the following simple  finite element scheme without stabilizers.

\smallskip

\begin{algorithm}
A numerical approximation for (\ref{moment})-(\ref{bc}) can be
obtained by seeking $\bu_h\in V_h$ and $p_h\in W_h$ such that for all $\bv\in V_h$ and $q\in W_h$,
\begin{eqnarray}
(\mu\nabla_w\bu_h,\ \nabla_w\bv)-(\nabla\cdot\bv,\;p_h)&=&(f,\;\bv),\label{wg1}\\
(\nabla\cdot\bu_h,\;q)&=&0.\label{wg2}
\end{eqnarray}
\end{algorithm}

Let $\Q_h$ be the element-wise defined $L^2$ projection onto the space $[P_{k+1}(T)]^{d\times d}$ for $T\in\T_h$.

\begin{lemma}
Let $\boldsymbol\phi\in \bH_0^1(\Omega)$, then on  $T\in\T_h$
\begin{eqnarray}
\nabla_w \boldsymbol\phi &=&\Q_h\nabla\boldsymbol\phi.\label{key}
\end{eqnarray}
\end{lemma}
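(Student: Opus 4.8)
The plan is to verify the defining relation of the weak gradient directly, using integration by parts. Fix $T \in \T_h$ and an arbitrary test matrix $\tau \in [P_{k+1}(T)]^{d \times d}$. Starting from the definition \eqref{wg} applied to $\boldsymbol\phi$, I would write
\[
(\nabla_w \boldsymbol\phi, \tau)_T = -(\boldsymbol\phi, \nabla\cdot\tau)_T + \langle \{\boldsymbol\phi\}, \tau\cdot\bn\rangle_{\partial T},
\]
and then integrate by parts on the first term to get $-(\boldsymbol\phi, \nabla\cdot\tau)_T = (\nabla\boldsymbol\phi, \tau)_T - \langle \boldsymbol\phi, \tau\cdot\bn\rangle_{\partial T}$. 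Adding the boundary term gives
\[
(\nabla_w \boldsymbol\phi, \tau)_T = (\nabla\boldsymbol\phi, \tau)_T + \langle \{\boldsymbol\phi\} - \boldsymbol\phi, \tau\cdot\bn\rangle_{\partial T}.
\]

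The key point is that the boundary term vanishes. Since $\boldsymbol\phi \in \bH_0^1(\Omega)$, it is continuous across each interior face $e \in \E_h^0$ (its trace from $T_1$ and $T_2$ agree), so $\{\boldsymbol\phi\} = \tfrac12(\boldsymbol\phi|_{T_1} + \boldsymbol\phi|_{T_2}) = \boldsymbol\phi$ on interior faces, and hence $\{\boldsymbol\phi\} - \boldsymbol\phi = 0$ there. On a boundary face $e \subset \partial\Omega$ we have $\{\boldsymbol\phi\} = \mathbf{0}$ by definition \eqref{avg}, while $\boldsymbol\phi = 0$ there because $\boldsymbol\phi \in \bH_0^1(\Omega)$; again $\{\boldsymbol\phi\} - \boldsymbol\phi = 0$. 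Therefore $\langle \{\boldsymbol\phi\} - \boldsymbol\phi, \tau\cdot\bn\rangle_{\partial T} = 0$ on every face of $T$, leaving $(\nabla_w \boldsymbol\phi, \tau)_T = (\nabla\boldsymbol\phi, \tau)_T$ for all $\tau \in [P_{k+1}(T)]^{d\times d}$.

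Finally, by the definition of the $L^2$ projection $\Q_h$ onto $[P_{k+1}(T)]^{d\times d}$, we have $(\nabla\boldsymbol\phi, \tau)_T = (\Q_h\nabla\boldsymbol\phi, \tau)_T$ for all such $\tau$. Combining, $(\nabla_w\boldsymbol\phi - \Q_h\nabla\boldsymbol\phi, \tau)_T = 0$ for all $\tau \in [P_{k+1}(T)]^{d\times d}$; since $\nabla_w\boldsymbol\phi \in [P_{k+1}(T)]^{d\times d}$ by \eqref{wg} and $\Q_h\nabla\boldsymbol\phi \in [P_{k+1}(T)]^{d\times d}$ by construction, taking $\tau = \nabla_w\boldsymbol\phi - \Q_h\nabla\boldsymbol\phi$ forces this difference to be zero on $T$. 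This proves \eqref{key}.

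I do not anticipate a serious obstacle here; the only subtlety is making sure the trace continuity of $\bH_0^1$ functions across interior faces and the vanishing trace on $\partial\Omega$ are invoked correctly so that the average $\{\boldsymbol\phi\}$ coincides with the single-valued trace $\boldsymbol\phi$ on $\partial T$. Everything else is a one-line integration by parts and the universal property of the $L^2$ projection.
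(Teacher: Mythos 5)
Your proof is correct and follows essentially the same route as the paper: apply the definition of the weak gradient, integrate by parts, use that $\{\boldsymbol\phi\}$ coincides with the trace of $\boldsymbol\phi$ on $\partial T$ (both on interior faces by continuity and on boundary faces since both vanish), and invoke the defining property of the $L^2$ projection $\Q_h$. Your treatment is slightly more explicit than the paper's (in particular the case split for boundary versus interior faces and the final step of testing against the difference), but the argument is the same.
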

\begin{proof}
Using (\ref{wg}) and  integration by parts, we have that for
any $\tau\in [P_{k+1}(T)]^{d\times d}$
\begin{eqnarray*}
(\nabla_w \boldsymbol\phi,\tau)_T &=& -(\boldsymbol\phi,\nabla\cdot\tau)_T
+\langle \{\boldsymbol\phi\},\tau\cdot\bn\rangle_{\pT}\\
&=& -(\boldsymbol\phi,\nabla\cdot\tau)_T
+\langle \boldsymbol\phi,\tau\cdot\bn\rangle_{\pT}\\
&=&(\nabla \boldsymbol\phi,\tau)_T=(\Q_h\nabla\phi,\tau)_T,
\end{eqnarray*}
which implies the desired identity (\ref{key}).
\end{proof}

For any function $\varphi\in H^1(T)$, the following trace
inequality holds true (see \cite{wymix} for details):
\begin{equation}\label{trace}
\|\varphi\|_{e}^2 \leq C \left( h_T^{-1} \|\varphi\|_T^2 + h_T
\|\nabla \varphi\|_{T}^2\right).
\end{equation}

\section{Well Posedness}
We assume  $\mu=1$ for simplicity. We start this section by introducing two semi-norms $\3bar \bv\3bar$ and  $\|\bv\|_{1,h}$
   for any $\bv\in V_h \cup \bH_0^1(\Omega)$ as follows:
\begin{eqnarray}
\3bar \bv\3bar^2 &=& \sum_{T\in\T_h}(\nabla_w\bv,\nabla_w\bv)_T, \label{norm1}\\
\|\bv\|_{1,h}^2&=&\sum_{T\in \T_h}\|\nabla \bv\|_T^2+\sum_{e\in\E_h}h_e^{-1}\|[\bv]\|_{e}^2.\label{norm2}
\end{eqnarray}

It is easy to see that $\|\bv\|_{1,h}$ defines a norm in $V_h$.
The following norm equivalence has been proved in  \cite{aw},
\begin{equation}\label{happy}
C_1\|\bv\|_{1,h}\le \3bar \bv\3bar\le C_2 \|\bv\|_{1,h} \quad\forall \bv\in V_h.
\end{equation}

The space $H({\rm div};\Omega)$ is defined as the set of vector-valued functions on $\Omega$ which,
together with their divergence, are square integrable; i.e.,
\[
H({\rm div}; \Omega)=\left\{ \bv\in [L^2(\Omega)]^d:\;
\nabla\cdot\bv \in L^2(\Omega)\right\}.
\]

Define a projection  $\Pi_h$ for $\tau\in H({\rm div},\Omega)$ (see \cite{bf}) such that
$\Pi_h\tau\in V_h$ and on each $T\in {\cal T}_h$
\begin{eqnarray}
(\nabla\cdot\tau,\;v)_T&=&(\nabla\cdot\Pi_h\tau,\;v)_T  \quad \forall v\in P_{k-1}(T).\label{key2}
\end{eqnarray}

The inf-sup condition for the finite element  formulation (\ref{wg1})-(\ref{wg2}) will be derived in the following lemma.

\smallskip
\begin{lemma}\label{Lemma:inf-sup}
There exists a positive constant $\beta$ independent of $h$ such that for all $\rho\in W_h$,
\begin{equation}\label{inf-sup}
\sup_{\bv\in V_h}\frac{(\nabla\cdot\bv,\rho)}{\3bar\bv\3bar}\ge \beta
\|\rho\|.
\end{equation}
\end{lemma}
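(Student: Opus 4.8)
The plan is to reduce the discrete inf-sup condition to the well-known continuous one for the Stokes pair by exploiting the commuting property of the projection $\Pi_h$ defined in (\ref{key2}) together with the norm equivalence (\ref{happy}). Recall that for the continuous problem it is classical (see \cite{gr}) that for every $\rho\in L^2_0(\Omega)$ there exists $\bw\in \bH^1_0(\Omega)$ with $\nabla\cdot\bw=\rho$ and $\|\bw\|_1\le C\|\rho\|$. Given $\rho\in W_h\subset L^2_0(\Omega)$, I would fix such a $\bw$ and then set $\bv=\Pi_h\bw\in V_h$ as the test function realizing the supremum.

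\textbf{Key steps.} First I would verify the divergence identity: by the defining property (\ref{key2}) of $\Pi_h$ and the fact that $\nabla\cdot\bv$ restricted to each $T$ lies in $P_{k-1}(T)$, we get, for every $q\in W_h$,
\begin{equation*}
(\nabla\cdot\Pi_h\bw,\ q)=(\nabla\cdot\bw,\ q)=(\rho,\ q).
\end{equation*}
Taking $q=\rho$ yields $(\nabla\cdot\Pi_h\bw,\ \rho)=\|\rho\|^2$. Second, I need the stability bound $\3bar\Pi_h\bw\3bar\le C\|\bw\|_1$. By (\ref{happy}) it suffices to bound $\|\Pi_h\bw\|_{1,h}$, which reduces to the standard $H(\mathrm{div})$-projection estimate: on each $T$, $\|\nabla\Pi_h\bw\|_T\lesssim h_T^{-1}\|\Pi_h\bw\|_T$ by an inverse inequality, and $\|\Pi_h\bw\|_T\lesssim \|\bw\|_T+h_T|\bw|_{1,T}$; for the jump terms one uses the trace inequality (\ref{trace}) together with the approximation properties of $\Pi_h$, noting that the jumps $[\bw]$ vanish since $\bw\in\bH^1_0(\Omega)$, so $[\Pi_h\bw]=[\Pi_h\bw-\bw]$ is controlled by $h_e^{1/2}$ times local $H^1$ seminorms. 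Combining these gives $\3bar\Pi_h\bw\3bar\le C\|\bw\|_1\le C\|\rho\|$. Third, I would assemble the estimate:
\begin{equation*}
\sup_{\bv\in V_h}\frac{(\nabla\cdot\bv,\rho)}{\3bar\bv\3bar}
\ \ge\ \frac{(\nabla\cdot\Pi_h\bw,\rho)}{\3bar\Pi_h\bw\3bar}
\ =\ \frac{\|\rho\|^2}{\3bar\Pi_h\bw\3bar}
\ \ge\ \frac{\|\rho\|^2}{C\|\rho\|}\ =\ \frac{1}{C}\|\rho\|,
\end{equation*}
so $\beta=1/C$ works and is $h$-independent.

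\textbf{Main obstacle.} The delicate point is the stability estimate $\3bar\Pi_h\bw\3bar\lesssim \|\bw\|_1$, i.e.\ the $L^2$-boundedness (up to $H^1$-norm) of the $H(\mathrm{div})$ projection $\Pi_h$ in the mesh-dependent norm $\|\cdot\|_{1,h}$. For the BDM/RT-type projection in (\ref{key2}) this $L^2$-stability is standard but requires care: the natural definition of $\Pi_h$ via moments may only be $H^s$-stable for $s>1/2$, so one typically invokes the Fortin-operator construction or a Scott--Zhang-type regularization, or else notes that since $\bw$ here is as regular as needed (it can be taken in $\bH^1_0(\Omega)$), the classical estimates $\|\bw-\Pi_h\bw\|_{0,T}\le C h_T|\bw|_{1,T}$ and the inverse inequality suffice. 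I would present the argument assuming this projection estimate as a standard fact (citing \cite{bf}), since spelling it out is routine but not illuminating; the only genuinely scheme-specific ingredient is the norm equivalence (\ref{happy}), which lets us pass freely between $\3bar\cdot\3bar$ and $\|\cdot\|_{1,h}$.
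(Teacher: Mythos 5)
Your proposal follows essentially the same route as the paper's proof: take a divergence right-inverse $\bw\in\bH_0^1(\Omega)$ of $\rho$ from \cite{gr}, test with $\bv=\Pi_h\bw$, use the commuting property (\ref{key2}) to evaluate the numerator, and use the norm equivalence (\ref{happy}) together with the trace inequality (\ref{trace}) and the vanishing of $[\bw]$ across faces to obtain the stability bound $\3bar\Pi_h\bw\3bar\le C\|\bw\|_1$. One small caveat on your sketch of that stability step: combining the inverse inequality with $\|\Pi_h\bw\|_T\lesssim\|\bw\|_T+h_T|\bw|_{1,T}$ as literally written leaves an uncontrolled $h_T^{-1}\|\bw\|_T$ term, so you should first subtract the local mean of $\bw$ (which $\Pi_h$ reproduces) before invoking the inverse inequality --- this is exactly the standard argument from \cite{bf} that you defer to, and the paper is equally terse at this point.
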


\begin{proof}
For any given $\rho\in W_h\subset L_0^2(\Omega)$, it is known
\cite{gr} that there exists a function
$\tilde\bv\in \bH_0^1(\Omega)$ such that
\begin{equation}\label{c-inf-sup}
\frac{(\nabla\cdot\tilde\bv,\rho)}{\|\tilde\bv\|_1}\ge C\|\rho\|,
\end{equation}
where $C>0$ is a constant independent of $h$. By
setting $\bv=\Pi_h\tilde{\bv}\in V_h$, we prove next that the following
holds true
\begin{equation}\label{m9}
\3bar\bv\3bar\le C\|\tilde{\bv}\|_1.
\end{equation}
It follows from (\ref{happy}), (\ref{trace}) and $\tilde{\bv}\in \bH_0^1(\Omega)$,
\begin{eqnarray*}
\3bar \bv\3bar^2&\le& C\|\bv\|_{1,h}^2=C(\sum_{T\in \T_h}\|\nabla \bv\|_T^2+\sum_{e\in\E_h^0}h_e^{-1}\|[\bv]\|_{e}^2)\\
&\le&C \sum_{T\in \T_h}\|\nabla \Pi_h\tilde{\bv}\|_T^2+\sum_{e\in\E_h^0}h_e^{-1}\|[\Pi_h\tilde{\bv}-\tilde{\bv}]\|_{e}^2\\
&\le& C\|\tilde{\bv}\|_1^2,
\end{eqnarray*}
which implies the  inequality (\ref{m9}). It follows from (\ref{key2}) that
\begin{eqnarray*}
(\nabla\cdot\bv,\;\rho)&=&(\nabla\cdot\Pi_h\tilde\bv,\;\rho)=(\nabla\cdot\tilde\bv,\;\rho).
\end{eqnarray*}
Using the above equation, (\ref{c-inf-sup}) and (\ref{m9}), we have
\begin{eqnarray*}
\frac{|(\nabla\cdot\bv,\rho)|} {\3bar\bv\3bar} &\ge &
\frac{|(\nabla\cdot\tilde\bv,\rho)|}{C\|\tilde\bv\|_1}\ge
\beta\|\rho\|,
\end{eqnarray*}
for a positive constant $\beta$. This completes the proof of the
lemma.
\end{proof}

\begin{lemma}
The weak Galerkin method (\ref{wg1})-(\ref{wg2}) has a unique solution.
\end{lemma}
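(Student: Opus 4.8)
The plan is to establish existence and uniqueness of the solution to the linear system \eqref{wg1}--\eqref{wg2} via the standard saddle-point theory, invoking the two structural ingredients that have already been prepared in the excerpt: the coercivity of the bilinear form $(\nabla_w\cdot,\nabla_w\cdot)$ on the kernel of the divergence constraint, and the inf--sup condition of Lemma~\ref{Lemma:inf-sup}. Since \eqref{wg1}--\eqref{wg2} is a square linear system on the finite-dimensional space $V_h\times W_h$, it suffices to prove uniqueness, i.e.\ that the only solution with $\bbf=0$ is $(\bu_h,p_h)=(\b 0,0)$.

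First I would set $\bbf=0$ and take $\bv=\bu_h$ in \eqref{wg1} and $q=p_h$ in \eqref{wg2}. Adding the two equations, the pressure terms $-(\nabla\cdot\bu_h,p_h)$ and $(\nabla\cdot\bu_h,p_h)$ cancel, leaving $\3bar\bu_h\3bar^2 = 0$. By the norm equivalence \eqref{happy} this forces $\|\bu_h\|_{1,h}=0$, and since $\|\cdot\|_{1,h}$ is a norm on $V_h$ we conclude $\bu_h=\b 0$. It then remains to show $p_h=0$: with $\bu_h=\b 0$, equation \eqref{wg1} reduces to $(\nabla\cdot\bv,p_h)=0$ for all $\bv\in V_h$. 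Applying the inf--sup inequality \eqref{inf-sup} with $\rho=p_h$ gives $\beta\|p_h\|\le \sup_{\bv\in V_h}(\nabla\cdot\bv,p_h)/\3bar\bv\3bar = 0$, hence $p_h=0$.

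Finally, because the system \eqref{wg1}--\eqref{wg2} is linear with the number of equations equal to the number of unknowns (the test and trial spaces coincide, $V_h\times W_h$), the trivial kernel just established implies the coefficient matrix is invertible, so a unique solution $(\bu_h,p_h)$ exists for every right-hand side $\bbf$. There is no real obstacle here: both hard ingredients — coercivity on the whole space via \eqref{happy} together with the fact that $\|\cdot\|_{1,h}$ is genuinely a norm on $V_h$, and the discrete inf--sup stability — are already in hand, so the argument is the routine Brezzi-type bookkeeping. The only point requiring a word of care is noting that coercivity holds on all of $V_h$ (not merely on the divergence-free subspace), which is what makes the $\bv=\bu_h$ test immediately yield $\bu_h=\b 0$.
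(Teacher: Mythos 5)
Your argument is correct and is essentially identical to the paper's proof: test with $\bv=\bu_h$, $q=p_h$ to get $\3bar\bu_h\3bar=0$, invoke the norm equivalence \eqref{happy} and the fact that $\|\cdot\|_{1,h}$ is a norm on $V_h$ to conclude $\bu_h=\b 0$, then use the inf--sup condition \eqref{inf-sup} to get $p_h=0$, with existence following from finite dimensionality.
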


\smallskip

\begin{proof}
It suffices to show that zero is the only solution of
(\ref{wg1})-(\ref{wg2}) if $\bbf={\bf 0}$. To this end, let $\bbf={\bf 0}$ and
take $\bv=\bu_h$ in (\ref{wg1}) and $q=p_h$ in (\ref{wg2}). By adding the
two resulting equations, we obtain
\[
(\nabla_w\bu_h,\ \nabla_w\bu_h)=0,
\]
which implies that $\nabla_w \bu_h=0$ on each element $T$. By (\ref{happy}), we have $\|\bu_h\|_{1,h}=0$ which implies that $\bu_h=0$.

Since $\bu_h={\bf 0}$ and $\bbf={\bf 0}$, the equation (\ref{wg1}) becomes $(\nabla\cdot\bv,\ p_h)=0$ for any $\bv\in V_h$. Then the inf-sup condition (\ref{inf-sup}) implies $p_h=0$. We have proved the lemma.
\end{proof}

\section{Error Equations}
In this section, we will derive the equations that the errors satisfy. First we define an element-wise $L^2$ projection $Q_h$ onto the local space $P_{k-1}(T)$ for $T\in\T_h$.
Let $\be_h=\Pi_h\bu-\bu_h$, $\bepsilon_h=\bu-\bu_h$ and $\varepsilon_h=Q_hp-p_h$.

\begin{lemma}
For any $\bv\in V_h$ and $q\in W_h$, the following error equations hold true,
\begin{eqnarray}
(\nabla_w\be_h,\; \nabla_w\bv)-(\varepsilon_h,\;\nabla\cdot\bv)&=&\ell_1(\bu,\bv)-\ell_2(\bu,\bv),\label{ee1}\\
(\nabla\cdot\be_h,\ q)&=&0,\label{ee2}
\end{eqnarray}
where
\begin{eqnarray}
\ell_1(\bu,\ \bv)&=&\l\bv-\{\bv\},\ \nabla\bu\cdot\bn-\Q_h(\nabla\bu)\cdot\bn\r_{\partial\T_h},\label{l1}\\
\ell_2(\bu,\bv)&=&(\nabla_w (\bu-\Pi_h\bu),\nabla_w\bv).\label{l3}
\end{eqnarray} 
\end{lemma}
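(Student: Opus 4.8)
The plan is to derive the two error equations by subtracting the weak Galerkin scheme \eqref{wg1}--\eqref{wg2} from suitably projected versions of the continuous problem, and to handle the consistency terms using the key identity \eqref{key}. I would start with the divergence equation \eqref{ee2}, since it is the easier of the two: by the commutativity property \eqref{key2} of $\Pi_h$, for any $q\in W_h$ we have $(\nabla\cdot\Pi_h\bu,q)_T=(\nabla\cdot\bu,q)_T$ on each $T$ because $q|_T\in P_{k-1}(T)$. Summing over $T\in\T_h$ and using $\nabla\cdot\bu=0$ from \eqref{cont} gives $(\nabla\cdot\Pi_h\bu,q)=0$; subtracting \eqref{wg2} yields $(\nabla\cdot\be_h,q)=0$, which is \eqref{ee2}.

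For \eqref{ee1}, I would test \eqref{moment} (with $\mu=1$) against $\bv\in V_h$, integrating by parts element by element. On each $T$, $(-\Delta\bu,\bv)_T=(\nabla\bu,\nabla\bv)_T-\langle\nabla\bu\cdot\bn,\bv\rangle_{\partial T}$, and $(\nabla p,\bv)_T=-(p,\nabla\cdot\bv)_T+\langle p\,\bn,\bv\rangle_{\partial T}$. Summing over $T$, the boundary terms involving $\bu\cdot\bn$ and $p\,\bn$ against the single-valued normal component of $\bv$ (recall $\bv\in H(\mathrm{div},\Omega)$, so $\bv\cdot\bn$ is continuous across interior faces and vanishes on $\partial\Omega$) combine into jump terms. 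The $p\,\bn$ terms vanish entirely because $p$ is single-valued and $[\bv\cdot\bn]=0$; the $\nabla\bu\cdot\bn$ terms survive only in their tangential part, producing $-\langle[\bv],\nabla\bu\cdot\bn\rangle_{\partial\T_h}$, which one rewrites using the definition of the average as $-\langle\bv-\{\bv\},\nabla\bu\cdot\bn\rangle_{\partial\T_h}$. This gives the identity $(\nabla\bu,\nabla\bv)_{\T_h}-(p,\nabla\cdot\bv)=(\bbf,\bv)+\langle\bv-\{\bv\},\nabla\bu\cdot\bn\rangle_{\partial\T_h}$.

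Next I would convert $(\nabla\bu,\nabla\bv)_{\T_h}$ into weak-gradient form. Using \eqref{key}, $\nabla_w\bu=\Q_h\nabla\bu$, so $(\nabla\bu,\nabla\bv)_{\T_h}=(\nabla_w\bu,\nabla_w\bv)$ only after accounting for the fact that $\nabla\bv$ need not equal $\nabla_w\bv$; here I instead write $(\nabla\bu,\nabla\bv)_{\T_h}$ directly against $\nabla_w\bv$ by noting $(\nabla_w\bu,\nabla_w\bv)=(\Q_h\nabla\bu,\nabla_w\bv)=(\nabla\bu,\nabla_w\bv)$ since $\nabla_w\bv\in[P_{k+1}]^{d\times d}$, and then expanding $(\nabla\bu,\nabla_w\bv)_T$ via the definition \eqref{wg} of $\nabla_w\bv$ and integration by parts to recover $(\nabla\bu,\nabla\bv)_T-\langle\nabla\bu\cdot\bn,\bv-\{\bv\}\rangle_{\partial T}+\langle\Q_h\nabla\bu\cdot\bn,\bv-\{\bv\}\rangle_{\partial T}$ — wait, more carefully: the extra boundary term that appears is exactly $\langle\{\bv\}-\bv,\Q_h(\nabla\bu)\cdot\bn\rangle_{\partial\T_h}$. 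Combining this with the surviving $\nabla\bu\cdot\bn$ term from the previous step produces precisely $\ell_1(\bu,\bv)=\langle\bv-\{\bv\},\nabla\bu\cdot\bn-\Q_h(\nabla\bu)\cdot\bn\rangle_{\partial\T_h}$. Then I would replace $\bu$ by $\Pi_h\bu$ on the left, introducing the correction $\ell_2(\bu,\bv)=(\nabla_w(\bu-\Pi_h\bu),\nabla_w\bv)$, and replace $p$ by $Q_hp$, which costs nothing because $(Q_hp-p,\nabla\cdot\bv)=0$ (as $\nabla\cdot\bv|_T\in P_{k-1}(T)$). Subtracting the scheme \eqref{wg1} then gives \eqref{ee1}.

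The main obstacle is bookkeeping the boundary terms correctly across interior faces: one must consistently use the $H(\mathrm{div})$-continuity of $\bv\cdot\bn$ to kill the pressure boundary contributions and to reduce the $\nabla\bu\cdot\bn$ contributions to the jump $[\bv]$ written via $\bv-\{\bv\}$, and then carefully track the sign and the $\Q_h$ appearing when the weak-gradient definition \eqref{wg} is invoked for $\bv$. Once the two boundary contributions (one from integrating the PDE by parts, one from unfolding $\nabla_w\bv$) are added, the cancellation that yields the clean form $\ell_1$ is the only nontrivial point; everything else is the standard projection-error reorganization.
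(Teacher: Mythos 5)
Your proposal is correct and follows essentially the same route as the paper: test the momentum equation against $\bv\in V_h$, integrate by parts elementwise, use the single-valuedness of $\bv\cdot\bn$ to kill the pressure trace terms and reduce the flux terms to $\langle\bv-\{\bv\},\nabla\bu\cdot\bn\rangle_{\partial\T_h}$, convert $(\nabla\bu,\nabla\bv)_{\T_h}$ to $(\nabla_w\bu,\nabla_w\bv)$ via \eqref{key} and the definition \eqref{wg} (which produces the $\Q_h(\nabla\bu)\cdot\bn$ part of $\ell_1$), insert $\Pi_h\bu$ and $Q_hp$, and subtract the scheme; the divergence equation follows from \eqref{key2} exactly as in the paper. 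The signs and the cancellation yielding $\ell_1$ and $\ell_2$ all check out.
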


\begin{proof}
First, we test (\ref{moment}) by
$\bv\in V_h$ to obtain
\begin{equation}\label{mm0}
-(\Delta\bu,\;\bv)+(\nabla p,\ \bv)=(\bbf,\; \bv).
\end{equation}
Integration by parts gives
\begin{equation}\label{mm1}
-(\Delta\bu,\;\bv)=(\nabla \bu,\nabla \bv)_{\T_h}- \langle
\nabla \bu\cdot\bn,\bv-\{\bv\}\rangle_{\partial\T_h},
\end{equation}
where we use the fact $\langle\nabla \bu\cdot\bn,\{\bv\}\rangle_{\pT_h}=0$.
It follows from integration by parts, (\ref{wg}) and (\ref{key}),
\begin{eqnarray}
(\nabla \bu,\nabla \bv)_{\T_h}&=&(\Q_h\nabla  \bu,\nabla \bv)_{\T_h}\nonumber\\
&=&-(\bv,\nabla\cdot (\Q_h\nabla \bu))_{\T_h}+\langle \bv, \Q_h\nabla \bu\cdot\bn\rangle_{\partial\T_h}\nonumber\\
&=&(\Q_h\nabla \bu, \nabla_w \bv)_{\T_h}+\langle \bv-\{\bv\},\Q_h\nabla \bu\cdot\bn\rangle_{\partial\T_h}\nonumber\\
&=&( \nabla_w \bu, \nabla_w \bv)+\langle \bv-\{\bv\},\Q_h\nabla \bu\cdot\bn\rangle_{\partial\T_h}.\label{j1}
\end{eqnarray}
Combining (\ref{mm1}) and (\ref{j1}) gives
\begin{eqnarray}
-(\Delta\bu,\;\bv)&=&(\nabla_w \bu,\nabla_w\bv)-\ell_1(\bu,\bv).\label{mm2}
\end{eqnarray}
Using integration by parts and $\bv\in V_h$,  we have
\begin{equation}\label{mm3}
(\nabla p,\ \bv)= -(p,\nabla\cdot\bv)_{\T_h}+\l p, \bv\cdot\bn\r_{\partial\T_h}=-(p,\nabla\cdot\bv)_{\T_h}=-(Q_hp,\nabla\cdot\bv)_{\T_h}.
\end{equation}
Substituting (\ref{mm2}) and (\ref{mm3}) into (\ref{mm0}) gives
\begin{equation}\label{mm4}
(\nabla_w\bu,\nabla_w\bv)-(Q_hp,\nabla\cdot\bv)_{\T_h}=(\bbf,\bv)+\ell_1(\bu,\bv).
\end{equation}
The difference of (\ref{mm4}) and (\ref{wg1}) implies
\begin{equation}\label{mm10}
(\nabla_w\bepsilon_h,\nabla_w\bv)-(\varepsilon_h,\nabla\cdot\bv)_{\T_h}=\ell_1(\bu,\bv)\quad\forall\bv\in V_h.
\end{equation}
Adding and subtracting $(\nabla_w\Pi_h\bu,\nabla_w\bv)$ in (\ref{mm10}), we have
\begin{equation}\label{mm11}
(\nabla_w\be_h,\nabla_w\bv) -(\varepsilon_h,\nabla\cdot\bv)_{\T_h}=\ell_1(\bu,\bv)-\ell_2(\bu,\bv),
\end{equation}
which implies (\ref{ee1}).

Testing equation (\ref{cont}) by $q\in W_h$ and using (\ref{key2}) give
\begin{equation}\label{mm5}
(\nabla\cdot\bu,\ q)=(\nabla\cdot\Pi_h\bu,\ q)_{\T_h}=0.
\end{equation}
The difference of (\ref{mm5}) and (\ref{wg2}) implies (\ref{ee2}). We have proved the lemma.
\end{proof}

\section{Error Estimates in Energy Norm}\label{Section:error-analysis}
In this section, we shall establish optimal order error estimates
for the velocity approximation $\bu_h$ in $\3bar\cdot\3bar$ norm and for the pressure approximation $p_h$ in
the standard $L^2$ norm.

It is easy to see that the following equations hold true for $\{\bv\}$ defined in (\ref{avg}),
\begin{equation}\label{jp}
\|\bv-\{\bv\}\|_e=\|[\bv]\|_e\quad {\rm if} \;e\subset \partial\Omega,\quad\|\bv-\{\bv\}\|_e=\frac12\|[\bv]\|_e\;\; {\rm if} \;e\in\E_h^0.
\end{equation}

\begin{lemma}
Let $\bw\in \bH^{k+1}(\Omega)$ and $\rho\in H^k(\Omega)$ and
$\bv\in V_h$. Assume that the finite element partition $\T_h$ is
shape regular. Then, the following estimates hold true
\begin{eqnarray}
|\ell_1(\bw,\ \bv)|&\le& Ch^{k}|\bw|_{k+1}\3bar \bv\3bar,\label{mmm2}\\
|\ell_2(\bw,\ \bv)|&\le& Ch^{k}|\bw|_{k+1}\3bar \bv\3bar.\label{mmm3}
\end{eqnarray}
\end{lemma}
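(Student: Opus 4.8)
The plan is to estimate the two terms $\ell_1$ and $\ell_2$ separately, using the fact that both involve the difference between a function and its $L^2$ projection, together with the norm equivalence (\ref{happy}) and the trace inequality (\ref{trace}).

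For $\ell_1(\bw,\bv)$, first I would invoke (\ref{jp}) to replace $\|\bv-\{\bv\}\|_e$ by (a fixed multiple of) $\|[\bv]\|_e$ on each face, so that the boundary sum is controlled by the jump seminorm in $\|\bv\|_{1,h}$. Then, by the Cauchy--Schwarz inequality applied face by face, $|\ell_1(\bw,\bv)|$ is bounded by $\big(\sum_{e}h_e\|\nabla\bw\cdot\bn-\Q_h(\nabla\bw)\cdot\bn\|_e^2\big)^{1/2}\big(\sum_{e}h_e^{-1}\|[\bv]\|_e^2\big)^{1/2}$. The second factor is $\le\|\bv\|_{1,h}$. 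For the first factor, on each element $T$ I would apply the trace inequality (\ref{trace}) to $\varphi=\nabla\bw-\Q_h\nabla\bw$ and then use the standard approximation property of the $L^2$ projection $\Q_h$ onto $[P_{k+1}(T)]^{d\times d}$, namely $\|\nabla\bw-\Q_h\nabla\bw\|_T+h_T\|\nabla(\nabla\bw-\Q_h\nabla\bw)\|_T\le Ch_T^{k}|\bw|_{k+1,T}$, which holds since $k+1\ge k$ gives more than enough polynomial degree. Summing over $T$ and using $h_e\simeq h_T$ yields $\big(\sum_e h_e\|\nabla\bw\cdot\bn-\Q_h\nabla\bw\cdot\bn\|_e^2\big)^{1/2}\le Ch^{k}|\bw|_{k+1}$. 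Combining with $\|\bv\|_{1,h}\le C^{-1}_1\3bar\bv\3bar$ from (\ref{happy}) gives (\ref{mmm2}).

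For $\ell_2(\bw,\bv)=(\nabla_w(\bw-\Pi_h\bw),\nabla_w\bv)$, I would apply Cauchy--Schwarz in the $(\cdot,\cdot)_{\T_h}$ inner product to get $|\ell_2(\bw,\bv)|\le\3bar\bw-\Pi_h\bw\3bar\,\3bar\bv\3bar$, so it remains to show $\3bar\bw-\Pi_h\bw\3bar\le Ch^k|\bw|_{k+1}$. Here the point is that $\bw-\Pi_h\bw\in V_h+\bH_0^1(\Omega)$, so by (\ref{happy}) it suffices to bound $\|\bw-\Pi_h\bw\|_{1,h}$, i.e. $\sum_T\|\nabla(\bw-\Pi_h\bw)\|_T^2+\sum_e h_e^{-1}\|[\bw-\Pi_h\bw]\|_e^2$; since $\bw$ itself is continuous, $[\bw-\Pi_h\bw]=[\Pi_h\bw-\bw]$ reduces the face terms to approximation of $\bw$ by $\Pi_h\bw$. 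Then I would use the known approximation properties of the $H(\mathrm{div})$-projection $\Pi_h$ (the BDM projection, cf. \cite{bf}): $\|\bw-\Pi_h\bw\|_T\le Ch_T^{k+1}|\bw|_{k+1,T}$ and $\|\nabla(\bw-\Pi_h\bw)\|_T\le Ch_T^{k}|\bw|_{k+1,T}$, combined again with the trace inequality (\ref{trace}) to handle the edge terms, giving $h_e^{-1}\|[\bw-\Pi_h\bw]\|_e^2\le Ch_T^{2k}|\bw|_{k+1,T}^2$ after using (\ref{trace}) on $\bw-\Pi_h\bw$. Summing over $T$ yields $\|\bw-\Pi_h\bw\|_{1,h}\le Ch^k|\bw|_{k+1}$, hence (\ref{mmm3}).

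The main obstacle is marshalling the correct approximation estimates for the two projections — $\Q_h$ onto piecewise $[P_{k+1}]^{d\times d}$ and $\Pi_h$ the $H(\mathrm{div})$ (BDM-type) projection onto $V_h$ — and in particular verifying the scaled trace bound $h_e\|(\nabla\bw-\Q_h\nabla\bw)\cdot\bn\|_e^2\le Ch_T^{2k}|\bw|_{k+1,T}^2$ uniformly over shape-regular elements; the rest is routine Cauchy--Schwarz and summation using $h_e\simeq h_T$. One should be slightly careful that in $\ell_1$ the quantity being projected is $\nabla\bw$ (so one gains $h^k$, not $h^{k+1}$), which matches the asserted order; and that in $\ell_2$, although $\Pi_h\bw$ approximates $\bw$ to order $h^{k+1}$ in $L^2$, its gradient only converges at order $h^k$, which is exactly what determines the final rate.
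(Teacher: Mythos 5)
Your treatment of $\ell_1$ coincides with the paper's argument: face-wise Cauchy--Schwarz, (\ref{jp}) to reduce $\|\bv-\{\bv\}\|_e$ to jump terms, the trace inequality (\ref{trace}) combined with the $L^2$-projection error for $\Q_h\nabla\bw$ to produce the factor $h^{k}|\bw|_{k+1}$, and (\ref{happy}) to replace $\|\bv\|_{1,h}$ by $\3bar\bv\3bar$; nothing to add there. For $\ell_2$ you take a genuinely different (but essentially equivalent) route: you split off $\3bar\bw-\Pi_h\bw\3bar$ by Cauchy--Schwarz and then control it by $\|\bw-\Pi_h\bw\|_{1,h}$ via (\ref{happy}), whereas the paper estimates $(\nabla_w(\bw-\Pi_h\bw),\bq)_T$ directly for an arbitrary polynomial $\bq$ using the definition (\ref{wg}), integration by parts, and an inverse trace inequality for $\bq$, and only afterwards sets $\bq=\nabla_w\bv$ (this is (\ref{m30})). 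The one step you should not take for granted is the appeal to (\ref{happy}): as stated it holds only for $\bv\in V_h$, and $\bw-\Pi_h\bw$ is not in $V_h$ (it lies only in the sum $V_h+\bH_0^1(\Omega)$). The direction you need, $\3bar\phi\3bar\le C\|\phi\|_{1,h}$, does extend to piecewise $H^1$ functions, because the test function $\tau$ in (\ref{wg}) is a polynomial so $\|\tau\|_{\pT}\le Ch_T^{-1/2}\|\tau\|_T$ applies; but that extension is precisely the inline computation the paper performs in (\ref{m30}), so you should either supply it or cite a version of the norm equivalence valid on the broken space. With that one line added, your reduction to the approximation properties of the BDM-type projection $\Pi_h$ ($L^2$ error of order $h^{k+1}$, gradient error of order $h^{k}$, edge terms handled by (\ref{trace}) together with $[\bw]=0$) is correct and yields (\ref{mmm3}) at the stated rate.
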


\begin{proof}
Using the Cauchy-Schwarz inequality, the trace inequality (\ref{trace}), (\ref{jp}) and  (\ref{happy}), we have
\begin{eqnarray*}
|\ell_1(\bw,\ \bv)|&=&|\sum_{T\in\T_h}\l\bv-\{\bv\},\ \nabla\bw\cdot\bn-\Q_h(\nabla\bw)\cdot\bn\r_\pT|\\
&\le & C \sum_{T\in\T_h}\|\nabla \bw-\Q_h\nabla \bw\|_{\pT}
\|\bv-\{\bv\}\|_\pT\nonumber\\
&\le & C \left(\sum_{T\in\T_h}h_T\|(\nabla \bw-\Q_h\nabla \bw)\|_{\pT}^2\right)^{\frac12}
\left(\sum_{e\in\E_h}h_e^{-1}\|[\bv]\|_e^2\right)^{\frac12}\\
&\le & Ch^{k}|\bw|_{k+1}\3bar \bv\3bar.
\end{eqnarray*}
Next we estimate $|\ell_2(\bw, \bv)|=|(\nabla_w (\bw-\Pi_h\bw),\nabla_w\bv)|$.
It follows from (\ref{wg}), integration  by parts, (\ref{trace}) and (\ref{jp}) that for any $\bq\in [P_{k+1}(T)]^{d\times d}$,
\begin{eqnarray}
 \nonumber &&|(\nabla_w(\bw-\Pi_h\bw), \bq)_{T}|\\
  &=&|-(\bw-\Pi_h\bw, \nabla\cdot\bq)_{T}+\l \bw-\{\Pi_h\bw\}, \bq\cdot\bn\r_{\pT}|\nonumber\\
&=&|(\nabla (\bw-\Pi_h\bw), \bq)_{T}+\l \Pi_h\bw-\{\Pi_h\bw\}, \bq\cdot\bn\r_{\pT}|\nonumber\\
&\le& \|\nabla (\bw-\Pi_h\bw)\|_T\|\bq\|_T+Ch^{-1/2}\|[\Pi_h\bw]\|_\pT\|\bq\|_T\nonumber\\
&=& \|\nabla (\bw-\Pi_h\bw)\|_T\|\bq\|_T+Ch^{-1/2}\|[\bw-\Pi_h\bw]\|_\pT\|\bq\|_T\nonumber\\
&\le& Ch^k|\bw|_{k+1, T}\|\bq\|_T.\label{m30}
\end{eqnarray}
Letting $\bq=\nabla_w\bv$ in the above equation and taking summation over $T$, we have
\begin{eqnarray*}
|\ell_2(\bw,\ \bv)|&\le& Ch^{k}|\bw|_{k+1}\3bar \bv\3bar.
\end{eqnarray*}
We have proved the lemma.
\end{proof}

\begin{theorem}\label{h1-bd}
Let $(\bu,p)\in  \bH_0^1(\Omega)\cap \bH^{k+1}(\Omega)\times
(L_0^2(\Omega)\cap H^{k}(\Omega))$ with $k\ge 1$ and $(\bu_h,p_h)\in
V_h\times W_h$ be the solution of (\ref{moment})-(\ref{bc}) and
(\ref{wg1})-(\ref{wg2}), respectively. Then, the following error
estimates hold  true
\begin{eqnarray}
\3bar  \bu-\bu_h\3bar &\le& Ch^{k}|\bu|_{k+1},\label{errv}\\
\|Q_hp-p_h\|&\le&Ch^{k}|\bu|_{k+1},\label{errp}\\
\|p-p_h\|&\le& Ch^{k}(|\bu|_{k+1}+|p|_{k}).\label{err-p}
\end{eqnarray}
\end{theorem}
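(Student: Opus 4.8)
The plan is to prove the three estimates in order, using the error equations (\ref{ee1})--(\ref{ee2}) together with the two bounds (\ref{mmm2})--(\ref{mmm3}) on $\ell_1$ and $\ell_2$, and the inf-sup condition (\ref{inf-sup}).

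First, for (\ref{errv}), I would take $\bv=\be_h$ in (\ref{ee1}) and $q=\varepsilon_h$ in (\ref{ee2}); adding the two equations and using (\ref{ee2}) to kill the term $(\varepsilon_h,\nabla\cdot\be_h)$, we get $\3bar\be_h\3bar^2=\ell_1(\bu,\be_h)-\ell_2(\bu,\be_h)$. Applying (\ref{mmm2})--(\ref{mmm3}) with $\bw=\bu$ and $\bv=\be_h$ gives $\3bar\be_h\3bar^2\le Ch^k|\bu|_{k+1}\3bar\be_h\3bar$, hence $\3bar\be_h\3bar\le Ch^k|\bu|_{k+1}$. To pass from $\be_h=\Pi_h\bu-\bu_h$ to $\bepsilon_h=\bu-\bu_h$, I would add and subtract $\Pi_h\bu$ and bound $\3bar\bu-\Pi_h\bu\3bar$ by a standard interpolation/approximation estimate for $\Pi_h$ combined with the estimate (\ref{m30}) already used in the previous lemma (which in fact bounds $\|\nabla_w(\bu-\Pi_h\bu)\|_T$ by $Ch^k|\bu|_{k+1,T}$). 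Summing over $T$ and applying the triangle inequality yields (\ref{errv}).

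Next, for the pressure estimate (\ref{errp}), I would return to the error equation (\ref{ee1}) and solve for the pressure term: $(\varepsilon_h,\nabla\cdot\bv)=(\nabla_w\be_h,\nabla_w\bv)-\ell_1(\bu,\bv)+\ell_2(\bu,\bv)$ for all $\bv\in V_h$. Bounding the right-hand side by Cauchy--Schwarz on the first term and (\ref{mmm2})--(\ref{mmm3}) on the other two, I obtain $|(\varepsilon_h,\nabla\cdot\bv)|\le C(\3bar\be_h\3bar+h^k|\bu|_{k+1})\3bar\bv\3bar\le Ch^k|\bu|_{k+1}\3bar\bv\3bar$, using (\ref{errv}) for $\be_h$. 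Dividing by $\3bar\bv\3bar$, taking the supremum over $\bv\in V_h$, and invoking the inf-sup condition (\ref{inf-sup}) with $\rho=\varepsilon_h\in W_h$ gives $\beta\|\varepsilon_h\|\le Ch^k|\bu|_{k+1}$, which is (\ref{errp}).

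Finally, (\ref{err-p}) follows from the triangle inequality $\|p-p_h\|\le\|p-Q_hp\|+\|Q_hp-p_h\|$: the second term is bounded by (\ref{errp}), and the first term is controlled by the standard $L^2$-approximation property of the projection $Q_h$ onto piecewise $P_{k-1}$, namely $\|p-Q_hp\|\le Ch^k|p|_k$. Summing gives the claimed bound $Ch^k(|\bu|_{k+1}+|p|_k)$. I expect the only mildly delicate point to be the passage from $\be_h$ to $\bepsilon_h$ in the energy estimate — it requires knowing that $\3bar\bu-\Pi_h\bu\3bar$ is itself $O(h^k)$, which is exactly what the computation (\ref{m30}) in the preceding lemma already supplies; everything else is a direct assembly of the inf-sup condition, the $\ell_i$ bounds, and elementary approximation theory.
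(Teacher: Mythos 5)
Your proposal is correct and follows essentially the same route as the paper: set $\bv=\be_h$, $q=\varepsilon_h$ in the error equations to get the energy bound via (\ref{mmm2})--(\ref{mmm3}), recover the pressure from (\ref{ee1}) together with the inf-sup condition (\ref{inf-sup}), and finish both $\3bar\bu-\bu_h\3bar$ and $\|p-p_h\|$ by the triangle inequality with standard approximation bounds. Your explicit treatment of the passage from $\be_h$ to $\bepsilon_h$ via (\ref{m30}) is exactly the step the paper leaves implicit in its one-line appeal to the triangle inequality.
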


\smallskip

\begin{proof}
By letting $\bv=\be_h$ in (\ref{ee1}) and $q=\varepsilon_h$ in
(\ref{ee2}) and adding the two resulting equations, we have
\begin{eqnarray}
\3bar \be_h\3bar^2&=&|\ell_1(\bu,\bv)-\ell_2(\bu,\bv)|.\label{main}
\end{eqnarray}
It then follows from (\ref{mmm2}) and (\ref{mmm3}) that
\begin{equation}\label{b-u}
\3bar \be_h\3bar^2 \le Ch^{k}|\bu|_{k+1}\3bar \be_h\3bar.
\end{equation}
By the triangle inequality and \eqref{b-u}, (\ref{errv}) holds.
To estimate
$\|\varepsilon_h\|$, we have from (\ref{ee1}) that
\[
(\varepsilon_h, \nabla\cdot\bv)=(\nabla_w\be_h,\nabla_w\bv)-\ell(\bu, \bv).
\]
Using the equation above (\ref{b-u}) and
(\ref{mmm2}), we arrive at
\[
|(\varepsilon_h, \nabla\cdot\bv)|\le
Ch^{k}|\bu|_{k+1}\3bar\bv\3bar.
\]
Combining the above estimate with the {\em inf-sup} condition
(\ref{inf-sup}) gives
\[
\|\varepsilon_h\|\le Ch^{k}|\bu|_{k+1},
\]
which yields the desired estimate (\ref{errp}).
 \eqref{err-p} follows by the triangle inequality.
\end{proof}

\section{Error Estimates in $L^2$ Norm}

In this section, we shall derive an $L^2$-error estimate
for the velocity approximation through a duality argument. Recall that $\be_h=\Pi_h\bu-\bu_h$ and $\bepsilon_h=\bu-\bu_h$. To this
end, consider the problem of seeking $(\bpsi,\xi)$ such that
\begin{eqnarray}
-\Delta\bpsi+\nabla \xi&=\bepsilon_h &\quad \mbox{in}\;\Omega,\label{dual-m}\\
\nabla\cdot\bpsi&=0 &\quad\mbox{in}\;\Omega,\label{dual-c}\\
\bpsi&= 0 &\quad\mbox{on}\;\partial\Omega.\label{dual-bc}
\end{eqnarray}
Assume that the dual problem has the $\bH^{2}(\Omega)\times
H^1(\Omega)$-regularity property in the sense that the solution
$(\bpsi,\xi)\in \bH^{2}(\Omega)\times H^1(\Omega)$ and the
following a priori estimate holds true:
\begin{equation}\label{reg}
\|\bpsi\|_{2}+\|\xi\|_1\le C\|\bepsilon_h\|.
\end{equation}

\medskip
\begin{theorem}
Let  $(\bu_h,p_h)\in V_h\times W_h$ be the solution of
(\ref{wg1})-(\ref{wg2}). Assume that (\ref{reg}) holds true.  Then, we have
\begin{equation}\label{l2err}
\|\bu-\bu_h\|\le Ch^{k+1}(|\bu|_{k+1}+|p|_{k}).
\end{equation}
\end{theorem}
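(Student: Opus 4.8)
The plan is to run a standard Aubin--Nitsche duality argument, testing the dual problem \eqref{dual-m}--\eqref{dual-bc} against the velocity error $\bepsilon_h = \bu - \bu_h$. First I would take the $L^2$ inner product of \eqref{dual-m} with $\bepsilon_h$, so that $\|\bepsilon_h\|^2 = (-\Delta\bpsi + \nabla\xi, \bepsilon_h)$. The strategy is then to process each term by integration by parts on each element, introducing the weak gradient $\nabla_w$ and the projections $\Q_h$ and $\Pi_h$ exactly as was done in the derivation of the error equations in Section~5. Concretely, I would mimic the computation leading to \eqref{mm2} to write $(-\Delta\bpsi,\bepsilon_h) = (\nabla_w\bpsi,\nabla_w\bepsilon_h) - \ell_1(\bpsi,\bepsilon_h)$ (with care, since $\bepsilon_h$ is not in $V_h$; one should split $\bepsilon_h = \be_h + (\bu - \Pi_h\bu)$ and treat the two pieces), and use $\nabla\cdot\bpsi = 0$ together with $\nabla\cdot\bu = 0$ exactly and the exact divergence-free property $\nabla\cdot\bu_h = 0$ guaranteed by the $H(\mathrm{div})$ element to make the pressure-coupling term $(\nabla\xi,\bepsilon_h)$ manageable (this is where pressure-robustness enters: the term involving $\xi$ should contribute nothing or only a higher-order consistency error).

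Next I would invoke the error equation \eqref{ee1} with the test function $\bv = \Pi_h\bpsi \in V_h$, which supplies an identity for $(\nabla_w\be_h, \nabla_w\Pi_h\bpsi) - (\varepsilon_h, \nabla\cdot\Pi_h\bpsi)$ in terms of $\ell_1(\bu,\Pi_h\bpsi) - \ell_2(\bu,\Pi_h\bpsi)$. Combining this with the expansion of $\|\bepsilon_h\|^2$ from the previous paragraph, and adding and subtracting $\nabla_w\Pi_h\bpsi$ and $\Q_h\nabla\bpsi$ where needed, the main term $(\nabla_w\be_h, \nabla_w\bpsi)$ cancels against its counterpart, leaving a sum of: (i) terms of the form $\ell_1(\bu, \bpsi - \Pi_h\bpsi)$ or $\ell_1(\bpsi, \bu - \Pi_h\bu)$, (ii) terms $\ell_2(\bu, \Pi_h\bpsi)$, (iii) a term involving $\nabla_w(\bpsi - \Pi_h\bpsi)$ paired with $\nabla_w\be_h$, and (iv) a residual pressure term $(\varepsilon_h, \nabla\cdot(\Pi_h\bpsi - \bpsi))$, which vanishes by \eqref{key2} since $\varepsilon_h \in W_h$ restricts to $P_{k-1}$ on each element. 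Each surviving term is then bounded using the approximation estimates and the commuting/trace machinery already in place: the $\ell_1$ and $\ell_2$ bounds of \eqref{mmm2}--\eqref{mmm3} applied with $\bw = \bpsi$ (only $\bH^2$ regular, so these give one power of $h$), the bound \eqref{m30}, the energy estimate \eqref{errv}, and the elliptic regularity \eqref{reg} to replace $|\bpsi|_2 + |\xi|_1$ by $\|\bepsilon_h\|$. Dividing through by $\|\bepsilon_h\|$ yields $\|\bepsilon_h\| \le Ch^{k+1}(|\bu|_{k+1} + |p|_k)$.

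The main obstacle, and the step requiring the most care, is the bookkeeping of the boundary (jump/average) terms when $\bpsi$ has only $\bH^2$ regularity: the term $\ell_1(\bpsi, \cdot)$ must be estimated so as to extract a full factor $h^{1}|\bpsi|_2$ (not just $h^{1/2}$), which requires pairing it against a quantity that itself carries a factor $h^{1/2}$ — here that quantity is $\bu - \Pi_h\bu$ restricted to the faces, estimated via the trace inequality \eqref{trace} to give $h^{k+1/2}|\bu|_{k+1}$. A secondary subtlety is making the pressure terms collapse correctly: one must verify that $(\nabla\xi, \bepsilon_h) = -( \xi, \nabla\cdot\bepsilon_h)_{\T_h} + \langle \xi, \bepsilon_h\cdot\bn\rangle_{\partial\T_h}$ reduces appropriately — the volume part vanishes because $\nabla\cdot\bepsilon_h = 0$ elementwise (both $\bu$ and $\bu_h$ being exactly divergence free), and the face part either vanishes by continuity of the normal trace in $H(\mathrm{div})$ and $\bpsi$-boundary conditions, or reduces to a term one can absorb using $\langle \xi, \{\bepsilon_h\}\cdot\bn\rangle = 0$-type identities. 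Once these two reductions are handled cleanly, the remaining estimates are routine applications of Cauchy--Schwarz, the trace inequality, approximation properties of $\Q_h$ and $\Pi_h$, and the already-established bound \eqref{errv}.
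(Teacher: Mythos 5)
Your overall strategy is exactly the paper's: test the dual problem \eqref{dual-m} with $\bepsilon_h$, kill $(\nabla\xi,\bepsilon_h)$ using the exact elementwise divergence-free property of $\bu$ and $\bu_h$ together with normal-trace continuity in $H(\mathrm{div})$, convert $-(\Delta\bpsi,\bepsilon_h)$ into weak-gradient form, substitute the error equation with test function $\Pi_h\bpsi$ (whose pressure contribution vanishes by \eqref{key2} and $\nabla\cdot\bpsi=0$), and estimate the surviving face terms so that each picks up the extra factor $h$ from approximating $\bpsi\in\bH^2$. Both of the ``subtle points'' you flag at the end are indeed the ones the paper handles, and your mechanism for extracting the full power $h\,|\bpsi|_2$ from $\ell_1(\bpsi,\bepsilon_h)$ is the correct one.

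The one step that would fail as written is your item (ii): you keep $\ell_2(\bu,\Pi_h\bpsi)$ as a surviving term and propose to bound it with \eqref{mmm2}--\eqref{mmm3}. But $\ell_2(\bu,\Pi_h\bpsi)$ has $\bw=\bu$, not $\bw=\bpsi$, so \eqref{mmm3} only gives $|\ell_2(\bu,\Pi_h\bpsi)|\le Ch^{k}|\bu|_{k+1}\3bar\Pi_h\bpsi\3bar\le Ch^{k}|\bu|_{k+1}\|\bpsi\|_1\le Ch^{k}|\bu|_{k+1}\|\bepsilon_h\|$; this single term is $O(h^{k})$ and would collapse the final estimate back to \eqref{errv}. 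The resolution is that this term does not actually survive: writing $\bepsilon_h=\be_h+(\bu-\Pi_h\bu)$ gives $(\nabla_w\Pi_h\bpsi,\nabla_w\bepsilon_h)=(\nabla_w\be_h,\nabla_w\Pi_h\bpsi)+\ell_2(\bu,\Pi_h\bpsi)$, and substituting \eqref{ee1} with $\bv=\Pi_h\bpsi$ (where $(\varepsilon_h,\nabla\cdot\Pi_h\bpsi)=0$) makes the two copies of $\ell_2(\bu,\Pi_h\bpsi)$ cancel, leaving exactly $\ell_1(\bu,\Pi_h\bpsi)$. The paper sidesteps this bookkeeping by invoking the error equation \eqref{mm10} written for $\bepsilon_h$ rather than \eqref{ee1} written for $\be_h$; see \eqref{m42}. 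Once this cancellation is made explicit, the remaining terms are the four the paper estimates --- $\ell_1(\bu,\Pi_h\bpsi)$, $(\nabla_w(\bpsi-\Pi_h\bpsi),\nabla_w\bepsilon_h)$, $(\nabla\bpsi-\Q_h\nabla\bpsi,\nabla\bepsilon_h)$ (the price for $\nabla\bepsilon_h$ not being a polynomial, which your proposed splitting of $\bepsilon_h$ also handles), and $\ell_1(\bpsi,\bepsilon_h)$ --- and your proposed bounds for those are correct.
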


\begin{proof}
Testing (\ref{dual-m}) by $\bepsilon_h$   gives
\begin{eqnarray}
(\bepsilon_h, \bepsilon_h)&=&-(\Delta\bpsi,\;\bepsilon_h)+(\nabla \xi,\ \bepsilon_h).\label{m20}
\end{eqnarray}

Using integration by parts and the fact $\l \nabla\bpsi\cdot\bn,\{\bepsilon_h\}\r_{\partial\T_h}=0$, then
\begin{eqnarray*}
-(\Delta\bpsi,\bepsilon_h)
&=&(\nabla \bpsi,\ \nabla\bepsilon_h)_{\T_h}-\l
\nabla\bpsi\cdot\bn,\ \bepsilon_h- \{\bepsilon_h\}\r_{\partial\T_h}\\
&=&(\Q_h\nabla \bpsi,\ \nabla\bepsilon_h)_{\T_h}+(\nabla\bpsi-\Q_h\nabla \bpsi,\ \nabla\bepsilon_h)_{\T_h}-\l
\nabla\bpsi\cdot\bn,\ \bepsilon_h- \{\bepsilon_h\}\r_{\partial\T_h}\\
&=&-(\nabla\cdot\Q_h\nabla \bpsi,\ \bepsilon_h)_{\T_h}+\l \Q_h\nabla\bpsi\cdot\bn,\ \bepsilon_h\r_{\partial\T_h}\\
&+&(\nabla\bpsi-\Q_h\nabla \bpsi,\ \nabla\bepsilon_h)_{\T_h}-\l\nabla\bpsi\cdot\bn,\ \bepsilon_h- \{\bepsilon_h\}\r_{\partial\T_h}\\
&=&(\Q_h\nabla \bpsi,\ \nabla_w\bepsilon_h)_{\T_h}+\l\Q_h\nabla\bpsi\cdot\bn,\ \bepsilon_h-\{\bepsilon_h\}\r_{\partial\T_h}\\
&+&(\nabla\bpsi-\Q_h\nabla \bpsi,\ \nabla\bepsilon_h)_{\T_h}-\l\nabla\bpsi\cdot\bn,\ \bepsilon_h- \{\bepsilon_h\}\r_{\partial\T_h}\\
&=&(\Q_h\nabla \bpsi,\ \nabla_w\bepsilon_h)_{\T_h}+(\nabla\bpsi-\Q_h\nabla \bpsi,\ \nabla\bepsilon_h)_{\T_h}-\ell_1(\bpsi,\bepsilon_h).
\end{eqnarray*}
It follows from (\ref{key}) that
\begin{eqnarray*}
(\Q_h\nabla \bpsi,\ \nabla_w\bepsilon_h)_{\T_h}&=&(\nabla_w \bpsi,\;\nabla_w\bepsilon_h)_{\T_h}\\
&=&(\nabla_w \Pi_h\bpsi,\;\nabla_w\bepsilon_h)_{\T_h}+(\nabla_w (\bpsi-\Pi_h\bpsi),\;\nabla_w\bepsilon_h)_{\T_h}.
\end{eqnarray*}
The two equations above imply that
\begin{eqnarray}
-(\Delta\bpsi,\bepsilon_h)&=&(\nabla_w \Pi_h\bpsi,\;\nabla_w\bepsilon_h)_{\T_h}+(\nabla_w (\bpsi-\Pi_h\bpsi),\;\nabla_w\bepsilon_h)_{\T_h}\nonumber\\
&+&(\nabla\bpsi-\Q_h\nabla \bpsi,\ \nabla\bepsilon_h)_{\T_h}-\ell_1(\bpsi,\bepsilon_h).\label{m40}
\end{eqnarray}
It follows from integration by parts and (\ref{cont}) and (\ref{wg2})
\begin{eqnarray}
(\nabla \xi,\ \bepsilon_h)&=&0.\label{m41}
\end{eqnarray}
The equation  (\ref{mm10}) and the fact $(\epsilon_h,\nabla\cdot\Pi_h\bpsi)_{\T_h}=0$ give
\begin{eqnarray}
(\nabla_w \Pi_h\bpsi,\;\nabla_w\bepsilon_h)_{\T_h}=\ell_1(\bu,\Pi\bpsi_h).\label{m42}
\end{eqnarray}
Combining (\ref{m20})-(\ref{m42}), we have
\begin{eqnarray}
\|\bepsilon_h\|^2&=&\ell_1(\bu,\Pi_h\bpsi)+(\nabla_w(\bpsi-\Pi_h\bpsi),
   \;\nabla_w\bepsilon_h)_{\T_h}\nonumber\\
&+&(\nabla\bpsi-\Q_h\nabla \bpsi,\ \nabla\bepsilon_h)_{\T_h}-\ell_1(\bpsi, \bepsilon_h).\label{mmm22}
\end{eqnarray}

Next we will estimate all the terms on the right hand side of (\ref{mmm22}). Using the Cauchy-Schwarz inequality, the trace inequality (\ref{trace}) and the definitions of $\Pi_h$ and $\Q_h$
we obtain
\begin{eqnarray*}
|\ell_1(\bu,\Pi_h\bpsi)|&\le&\left| \langle (\nabla \bu-\Q_h\nabla
\bu)\cdot\bn,\;
\Pi_h\bpsi-\{\Pi_h\bpsi\}\rangle_{\pT_h} \right|\\
&\le& \left(\sum_{T\in\T_h}\|\nabla \bu-\Q_h\nabla \bu\|^2_\pT\right)^{1/2}
\left(\sum_{T\in\T_h}\|\Pi_h\bpsi-\{\Pi_h\bpsi\}\|^2_\pT\right)^{1/2}\nonumber \\
&\le& C\left(\sum_{T\in\T_h}h\|\nabla \bu-\Q_h\nabla \bu\|^2_\pT\right)^{1/2}
\left(\sum_{T\in\T_h}h^{-1}\|[\Pi_h\bpsi-\bpsi]\|^2_\pT\right)^{1/2} \nonumber\\
&\le&  Ch^{k+1}|\bu|_{k+1}|\bpsi|_2.\nonumber
\end{eqnarray*}
It follows from (\ref{m30}) and (\ref{errv}) that
\begin{eqnarray*}
|(\nabla_w(\bpsi-\Pi_h\bpsi),\;\nabla_w \bepsilon_h)_{\T_h}|&\le& C\3bar \bepsilon_h\3bar \3bar \bpsi-\Pi_h\bpsi\3bar\\
&\le& Ch^{k+1}|\bu|_{k+1}|\bpsi|_2.
\end{eqnarray*}
The norm equivalence (\ref{happy}) and (\ref{errv}) imply
\begin{eqnarray*}
|(\nabla\bpsi-\Q_h\nabla \bpsi,\ \nabla \bepsilon_h)_{\T_h}|&\le& C(\sum_{T\in\T_h}\|\nabla \bepsilon_h\|_T^2)^{1/2} (\sum_{T\in\T_h}\|\nabla\bpsi-\Q_h\nabla \bpsi\|_T^2)^{1/2}\\
&\le& C(\sum_{T\in\T_h}(\|\nabla(\bu-\Pi_h\bu)\|_T^2+\|\nabla(\Pi_h\bu-\bu_h)\|_T^2))^{1/2}\\
&\times &(\sum_{T\in\T_h}\|\nabla\bpsi-\Q_h\nabla \bpsi\|_T^2)^{1/2}\\
&\le& Ch|\bpsi|_2(h^k|\bu|_{k+1}+\3bar \Pi_h\bu-\bu_h\3bar)\\
&\le& Ch^{k+1}|\bu|_{k+1}|\bpsi|_2.
\end{eqnarray*}
Using (\ref{happy}) and (\ref{errv}), we obtain
\begin{eqnarray*}
|\ell_1(\bpsi,\bepsilon_h)|&=&\left|\l (\nabla \bpsi-\Q_h\nabla \bpsi)\cdot\bn,\
\bepsilon_h-\{\bepsilon_h\}\r_{\partial\T_h}\right|\\
&\le&\sum_{T\in\T_h} h_T^{1/2}\|\nabla \bpsi-\Q_h\nabla \bpsi\|_\pT  h_T^{-1/2}\|[\bepsilon_h]\|_\pT\\
&\le&Ch\|\bpsi\|_2(\sum_{T\in\T_h}h_T^{-1}(\|[\be_h]\|^2_\pT+\|[\bu-\Pi_h\bu]\|^2_\pT)^{1/2}\\
&\le&Ch\|\bpsi\|_2(\3bar\be_h\3bar+(\sum_{T\in\T_h}h_T^{-1}\|[\bu-\Pi_h\bu]\|^2_\pT)^{1/2}\\
&\le&  Ch^{k+1 }|\bu|_{k+1}\|\bpsi\|_2.
\end{eqnarray*}
Combining all the estimates above
with (\ref{mmm22}) yields
$$
\|\bepsilon_h\|^2 \leq C h^{k+1}|\bu|_{k+1} \|\bpsi\|_2.
$$
The estimate (\ref{l2err}) follows from the above inequality and
the regularity assumption (\ref{reg}). We have completed the proof.
\end{proof}

\section{Numerical Experiments}\label{Section:numerical-experiments}

\subsection{Example \ref{e--1}}\label{e--1}
Consider problem  \eqref{moment}--\eqref{bc} with $\Omega=(0,1)^2$.
The source term $\bf$ and the boundary value $\bg$ are chosen so that the exact solution is
\a{
    \bu(x,y)&=\p{-(2-4y) (y-y^2)(x-x^2)^2\\(2-4x) (x-x^2)(y-y^2)^2}, \
   \\  p&= (2-4x) (x-x^2)(2-4y) (y-y^2).
}
In this example, we use uniform triangular grids shown in Figure \ref{grid1}.
In Table \ref{t1}, we list the errors and the orders of convergence.
We can see that the optimal order of convergence is achieved in all finite elements.

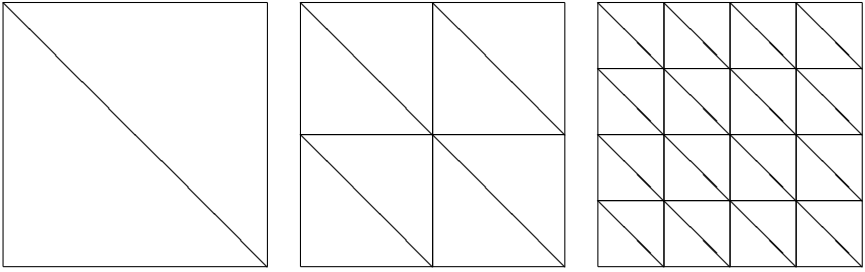
\begin{figure}[h!]
 \begin{center} \setlength\unitlength{1.25pt}
\begin{picture}(260,80)(0,0)
  \def\tr{\begin{picture}(20,20)(0,0)\put(0,0){\line(1,0){20}}\put(0,20){\line(1,0){20}}
          \put(0,0){\line(0,1){20}} \put(20,0){\line(0,1){20}}
   \put(0,20){\line(1,-1){20}}   \end{picture}}
 {\setlength\unitlength{5pt}
 \multiput(0,0)(20,0){1}{\multiput(0,0)(0,20){1}{\tr}}}

  {\setlength\unitlength{2.5pt}
 \multiput(45,0)(20,0){2}{\multiput(0,0)(0,20){2}{\tr}}}

  \multiput(180,0)(20,0){4}{\multiput(0,0)(0,20){4}{\tr}}

 \end{picture}\end{center}
\caption{The first three levels of triangular grids for Examples \ref{e--1} and \ref{e--2}.}
\label{grid1}
\end{figure}

\begin{table}[h!]
  \centering   \renewcommand{\arraystretch}{1.05}
  \caption{Example \ref{e--1}:  Error profiles and convergence rates on grids shown in Figure \ref{grid1}. }
\label{t1}
\begin{tabular}{c|cc|cc|cc}
\hline
level & $\|\bu- \bu_h \|_0 $  &rate &  $\3bar \bu- \bu_h \3bar $ &rate
  &  $\|p - p_h \|_0 $ &rate   \\
\hline
 &\multicolumn{6}{c}{by the $P_1^2$-$P_0$ finite element } \\ \hline
 5&   0.6699E-01 &  1.82&   0.3105E+01 &  1.05&   0.1087E+01 &  1.12 \\
 6&   0.1754E-01 &  1.93&   0.1516E+01 &  1.03&   0.5164E+00 &  1.07 \\
 7&   0.4468E-02 &  1.97&   0.7479E+00 &  1.02&   0.2528E+00 &  1.03 \\
\hline
 &\multicolumn{6}{c}{by the $P_2^2$-$P_1$ finite element } \\ \hline
 5&   0.7769E-03 &  3.10&   0.1607E+00 &  1.95&   0.3320E+00 &  1.68 \\
 6&   0.9439E-04 &  3.04&   0.4059E-01 &  1.98&   0.9258E-01 &  1.84 \\
 7&   0.1170E-04 &  3.01&   0.1018E-01 &  2.00&   0.2443E-01 &  1.92 \\
\hline
 &\multicolumn{6}{c}{by the $P_3^2$-$P_2$ finite element } \\ \hline
 4&   0.3821E-03 &  3.90&   0.4483E-01 &  2.80&   0.1374E+00 &  2.71 \\
 5&   0.2444E-04 &  3.97&   0.5895E-02 &  2.93&   0.1884E-01 &  2.87 \\
 6&   0.1550E-05 &  3.98&   0.7560E-03 &  2.96&   0.2461E-02 &  2.94 \\
 \hline
 &\multicolumn{6}{c}{by the $P_4^2$-$P_3$ finite element } \\ \hline
 4&   0.2350E-04 &  4.85&   0.3140E-02 &  3.81&   0.5804E-02 &  3.85 \\
 5&   0.7645E-06 &  4.94&   0.2082E-03 &  3.91&   0.3818E-03 &  3.93 \\
 6&   0.2433E-07 &  4.97&   0.1341E-04 &  3.96&   0.2447E-04 &  3.96 \\
 \hline
\end{tabular}%
\end{table}%

\subsection{Example \ref{e--2}}\label{e--2}
This example is from Example 1.1 in \cite{John}, for testing the pressure robustness
   of the method.
We solve the following Stokes equations with a different Reynolds number $\mu^{-1}>0$,
\a{
-\mu \Delta\bu+\nabla p&=\p{3(x-x^2)-\frac 12\\0 } \quad
 && \mbox{in}\;\Omega,&&  \\
\nabla\cdot\bu &= 0\quad && \mbox{in}\;\Omega, \\
\bu&= \b 0 \quad&&  \mbox{on}\; \partial\Omega, }where $ \Omega=(0,1)^3$.
The exact solution is, independent of $\mu$,
\an{\label{s2}
     \bu =\p{0\\0}, \quad p&= (x-x^2)(x-\frac 12). }
For this problem,  a non-pressure-robust method, such as the Taylor-Hood method,
   would produce a $\mu$-dependent velocity solution, cf. the numerical results in
     \cite{John} and the data in Tables \ref{t2} and \ref{t3} below.

In this example, we use nonuniform grids shown in Figure \ref{grid1}.
In Table \ref{t2} and Table \ref{t3}, we list the errors and the orders of convergence
  for both the $H(\operatorname{div})$ finite element and the Taylor-Hood element.
We can see that the discrete velocity solution $u_h$ is identically zero for our
   method (only computer round-off error), i.e.,
   of optimal order convergence independently of $\mu$.
But its error is $\mu^{-1}O(h^k)$ for the Taylor-Hood element.
For $P_2$ Taylor-Hood element,  there is one order superconvergence in both $L^2$-norm and
   $H^1$-semi-norm,  noting $\bu=\b 0$ here.
But there is no such superconvergence for $P_k$ $H(\operatorname{div})$ elements, neither
   for $P_3$ Taylor-Hood element.
To show the pollution effect of the Taylor-Hood element,  we plot two solutions in
   Figure \ref{g-u1}.

\begin{table}[h!]
  \centering   \renewcommand{\arraystretch}{1.05}
  \caption{Example \ref{e--2}:  Error profiles and convergence rates on grids shown in Figure \ref{grid1}. }
\label{t2}
\begin{tabular}{c|cc|cc|cc}
\hline
level & $\|\bu- \bu_h \|_0 $  &rate &  $\3bar \bu- \bu_h \3bar $ &rate
  &  $\|p - p_h \|_0 $ &rate   \\
\hline
 &\multicolumn{6}{c}{by the $P_2^2$-$P_1$ Taylor-Hood element, $\mu=1 $ } \\ \hline
 5&   0.2292E-06 &  3.99&   0.2733E-04 &  2.93&   0.9301E-03 &  1.89 \\
 6&   0.1438E-07 &  3.99&   0.3491E-05 &  2.97&   0.2413E-03 &  1.95 \\
 7&   0.9002E-09 &  4.00&   0.4410E-06 &  2.98&   0.6148E-04 &  1.97 \\\hline
 &\multicolumn{6}{c}{by the $P_2^2$-$P_1$ Taylor-Hood element, $\mu=10^{-6}$ } \\ \hline
 5&   0.2292E+00 &  3.99&   0.2733E+02 &  2.93&   0.9301E-03 &  1.89 \\
 6&   0.1438E-01 &  3.99&   0.3491E+01 &  2.97&   0.2413E-03 &  1.95 \\
 7&   0.9002E-03 &  4.00&   0.4410E+00 &  2.98&   0.6148E-04 &  1.97 \\
 \hline
 &\multicolumn{6}{c}{by the $P_2^2$-$P_1$ $H(\operatorname{div})$ finite element, $\mu=1 $ } \\ \hline
 5&   0.1224E-18 &      &   0.1674E-17 &      &   0.5615E-03 &  1.94 \\
 6&   0.1032E-18 &      &   0.1813E-17 &      &   0.1434E-03 &  1.97 \\
 7&   0.1002E-18 &      &   0.1830E-17 &      &   0.3624E-04 &  1.98 \\
\hline
 &\multicolumn{6}{c}{by the $P_2^2$-$P_1$ $H(\operatorname{div})$ finite element, $\mu=10^{-6}$ } \\ \hline
 5&   0.1032E-12 &      &   0.1554E-11 &      &   0.5615E-03 &  1.94 \\
 6&   0.6375E-13 &      &   0.1627E-11 &      &   0.1434E-03 &  1.97 \\
 7&   0.9934E-13 &      &   0.1760E-11 &      &   0.3624E-04 &  1.98 \\
 \hline
\end{tabular}%
\end{table}%

\begin{figure}[htb]\begin{center}
\includegraphics[width=4.4in]{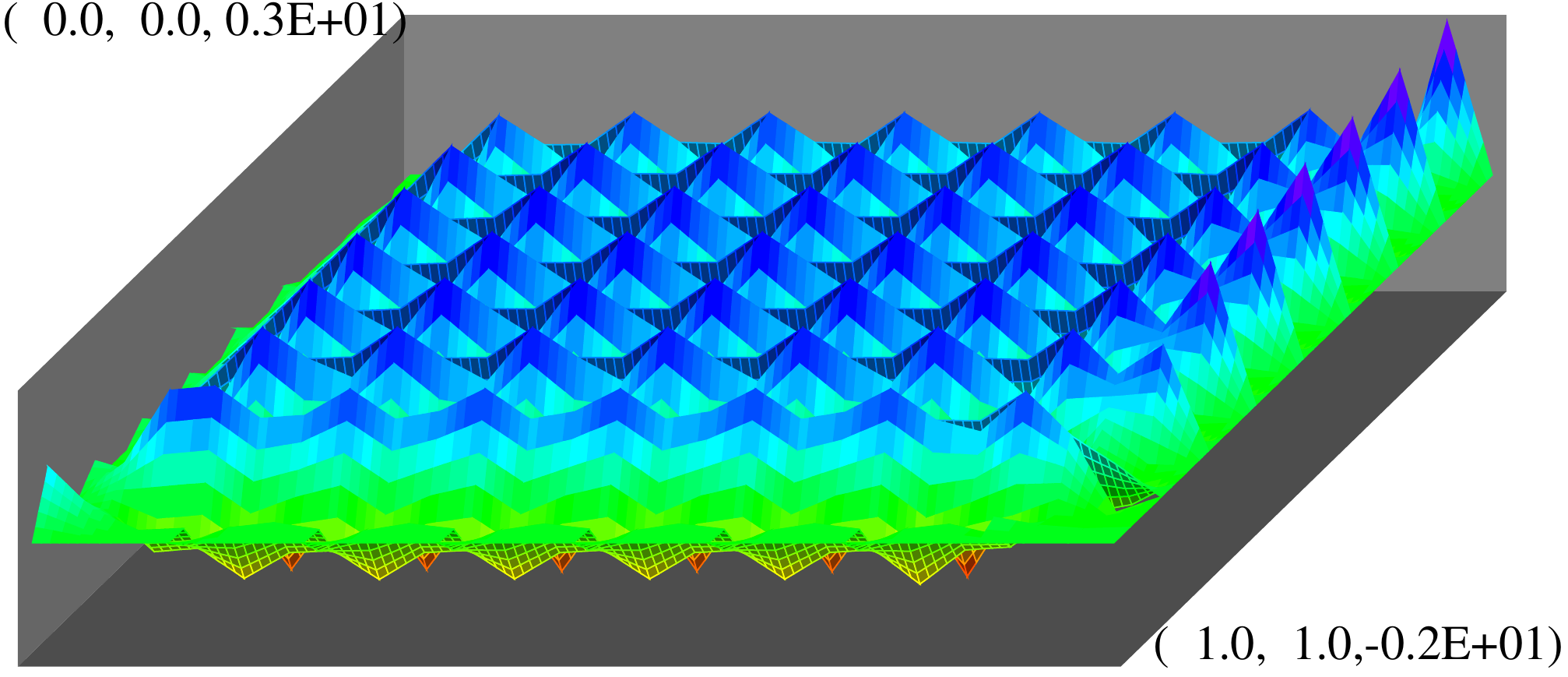} \\
\includegraphics[width=4.4in]{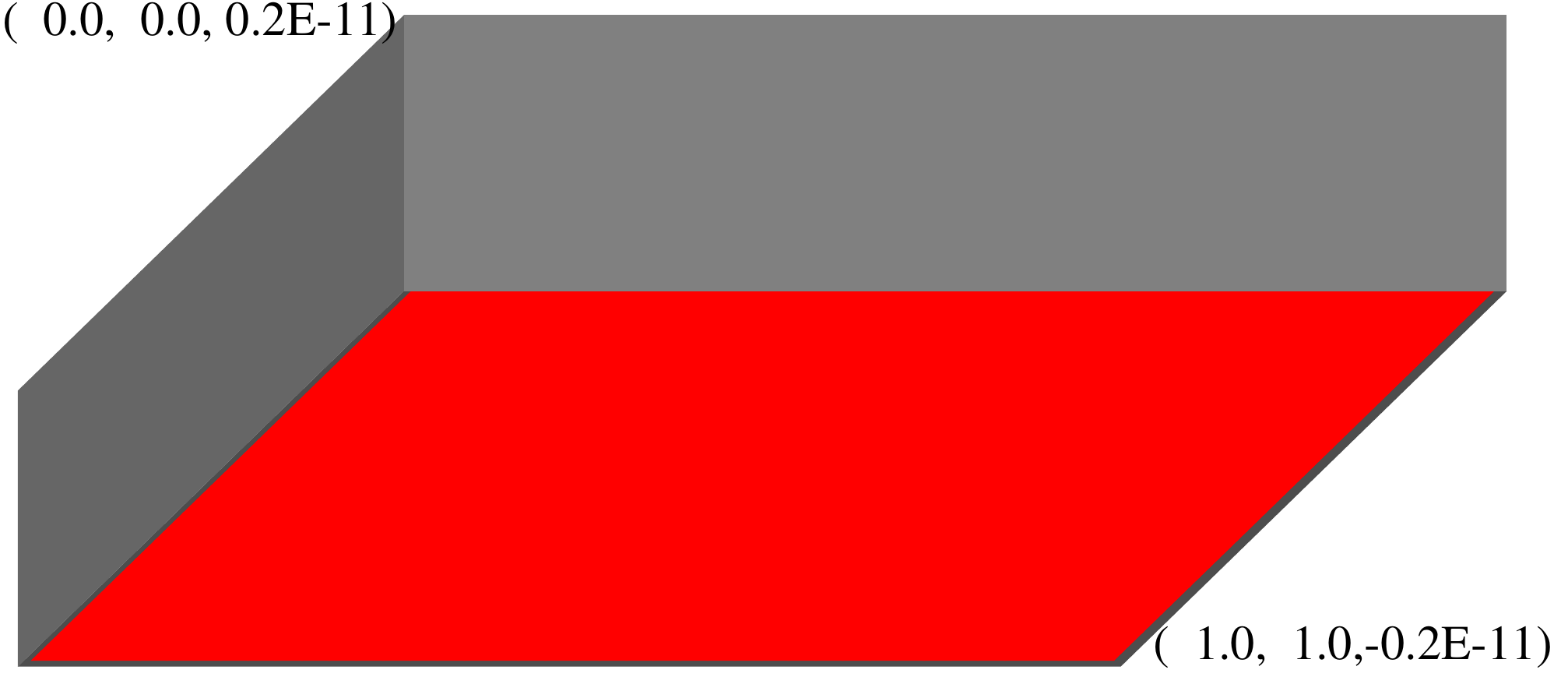}

\caption{The solution $(\bu_h)_1$ of the $P_3$ Taylor-Hood element (top) and
   the $P_3$ $H(\operatorname{div})$ element (bottom), on the 4th level grid,
     for $\mu=10^{-6}$.  }
\label{g-u1}
\end{center}
\end{figure}

\begin{table}[h!]
  \centering   \renewcommand{\arraystretch}{1.05}
  \caption{Example \ref{e--1}:  Error profiles and convergence rates on grids shown in Figure \ref{grid1}. }
\label{t3}
\begin{tabular}{c|cc|cc|cc}
\hline
level & $\|\bu- \bu_h \|_0 $  &rate &  $\3bar \bu- \bu_h \3bar $ &rate
  &  $\|p - p_h \|_0 $ &rate   \\
\hline
 &\multicolumn{6}{c}{by the $P_3^2$-$P_2$ Taylor-Hood element, $\mu=1 $ } \\ \hline
 4&   0.7690E-06 &  3.64&   0.4897E-04 &  2.74&   0.5746E-04 &  3.09 \\
 5&   0.5345E-07 &  3.85&   0.6637E-05 &  2.88&   0.6828E-05 &  3.07 \\
 6&   0.3509E-08 &  3.93&   0.8620E-06 &  2.94&   0.8275E-06 &  3.04 \\
\hline
 &\multicolumn{6}{c}{by the $P_3^2$-$P_2$ Taylor-Hood element, $\mu=10^{-6}$ } \\ \hline
 4&   0.7690E+00 &  3.64&   0.4897E+02 &  2.74&   0.5746E-04 &  3.09 \\
 5&   0.5345E-01 &  3.85&   0.6637E+01 &  2.88&   0.6828E-05 &  3.07 \\
 6&   0.3509E-02 &  3.93&   0.8620E+00 &  2.94&   0.8275E-06 &  3.04 \\
\hline
 &\multicolumn{6}{c}{by the $P_3^2$-$P_2$ $H(\operatorname{div})$ finite element, $\mu=1 $ } \\ \hline
 4&   0.1699E-17 &      &   0.1801E-16 &      &   0.5580E-04 &  3.00 \\
 5&   0.1403E-17 &      &   0.1486E-16 &      &   0.6975E-05 &  3.00 \\
 6&   0.3710E-18 &      &   0.7074E-17 &      &   0.8719E-06 &  3.00 \\
 \hline
 &\multicolumn{6}{c}{by the $P_3^2$-$P_2$ $H(\operatorname{div})$ finite element, $\mu=10^{-6}$ } \\ \hline
 4&   0.1655E-11 &      &   0.1789E-10 &      &   0.5580E-04 &  3.00 \\
 5&   0.1318E-11 &      &   0.1449E-10 &      &   0.6975E-05 &  3.00 \\
 6&   0.3331E-12 &      &   0.6899E-11 &      &   0.8719E-06 &  3.00 \\
 \hline
\end{tabular}%
\end{table}%

\subsection{Example \ref{e--3}}\label{e--3}
Consider problem \eqref{moment} with $\Omega=(0,1)^3$.
The source term $f$ and the boundary value $g$ are chosen so that the exact solution is
\begin{equation*}
    \bu(x,y,z)=\p{y^2\\z^2\\x^2}, \quad p(x,y,z)=yz-\frac 14.
\end{equation*}
We use uniform tetrahedral meshes shown in Figure \ref{grid3}.
The results of the $P_2^2$-$P_{1}$ $H(\operatorname{div})$ mixed finite element method
    are listed in Table \ref{t4}.
The method converges at the optimal order in the usual norms.

\begin{figure}[h!]
\begin{center}
 \setlength\unitlength{1pt}
    \begin{picture}(320,118)(0,3)
    \put(0,0){\begin{picture}(110,110)(0,0)
       \multiput(0,0)(80,0){2}{\line(0,1){80}}  \multiput(0,0)(0,80){2}{\line(1,0){80}}
       \multiput(0,80)(80,0){2}{\line(1,1){20}} \multiput(0,80)(20,20){2}{\line(1,0){80}}
       \multiput(80,0)(0,80){2}{\line(1,1){20}}  \multiput(80,0)(20,20){2}{\line(0,1){80}}
    \put(80,0){\line(-1,1){80}}\put(80,0){\line(1,5){20}}\put(80,80){\line(-3,1){60}}
      \end{picture}}
    \put(110,0){\begin{picture}(110,110)(0,0)
       \multiput(0,0)(40,0){3}{\line(0,1){80}}  \multiput(0,0)(0,40){3}{\line(1,0){80}}
       \multiput(0,80)(40,0){3}{\line(1,1){20}} \multiput(0,80)(10,10){3}{\line(1,0){80}}
       \multiput(80,0)(0,40){3}{\line(1,1){20}}  \multiput(80,0)(10,10){3}{\line(0,1){80}}
    \put(80,0){\line(-1,1){80}}\put(80,0){\line(1,5){20}}\put(80,80){\line(-3,1){60}}
       \multiput(40,0)(40,40){2}{\line(-1,1){40}}
        \multiput(80,40)(10,-30){2}{\line(1,5){10}}
        \multiput(40,80)(50,10){2}{\line(-3,1){30}}
      \end{picture}}
    \put(220,0){\begin{picture}(110,110)(0,0)
       \multiput(0,0)(20,0){5}{\line(0,1){80}}  \multiput(0,0)(0,20){5}{\line(1,0){80}}
       \multiput(0,80)(20,0){5}{\line(1,1){20}} \multiput(0,80)(5,5){5}{\line(1,0){80}}
       \multiput(80,0)(0,20){5}{\line(1,1){20}}  \multiput(80,0)(5,5){5}{\line(0,1){80}}
    \put(80,0){\line(-1,1){80}}\put(80,0){\line(1,5){20}}\put(80,80){\line(-3,1){60}}
       \multiput(40,0)(40,40){2}{\line(-1,1){40}}
        \multiput(80,40)(10,-30){2}{\line(1,5){10}}
        \multiput(40,80)(50,10){2}{\line(-3,1){30}}

       \multiput(20,0)(60,60){2}{\line(-1,1){20}}   \multiput(60,0)(20,20){2}{\line(-1,1){60}}
        \multiput(80,60)(15,-45){2}{\line(1,5){5}} \multiput(80,20)(5,-15){2}{\line(1,5){15}}
        \multiput(20,80)(75,15){2}{\line(-3,1){15}}\multiput(60,80)(25,5){2}{\line(-3,1){45}}
      \end{picture}}

    \end{picture}
    \end{center}
\caption{  The first three levels of grids used in Example \ref{e--3}. }
\label{grid3}
\end{figure}
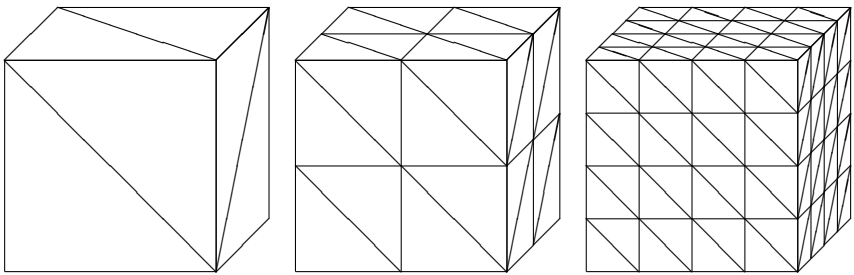

\begin{table}[h!]
  \centering \renewcommand{\arraystretch}{1.1}
  \caption{Example \ref{e--3}:  Error profiles and convergence rates on 3D grids shown in Figure \ref{grid3}. }
\label{t4}
\begin{tabular}{c|cc|cc|cc}
\hline
Grid & $\3bar \bu- \bu_h \3bar $ &rate & $\|\bu- \bu_h \|_0 $  &rate &
    $\|p - p_h \|_0 $ &rate   \\
\hline
 1&    0.2599E+00& 0.00&    0.1654E-01& 0.00&    0.6503E+00& 0.00 \\
 2&    0.2155E+00& 0.27&    0.1091E-01& 0.60&    0.3843E+00& 0.76 \\
 3&    0.1212E+00& 0.83&    0.3520E-02& 1.63&    0.1232E+00& 1.64 \\
 4&    0.6209E-01& 0.97&    0.1006E-02& 1.81&    0.3879E-01& 1.67 \\
 \hline
\end{tabular}%
\end{table}%


\begin{thebibliography}{99}
\bibitem{aw}
A. Al-Taweel and X. Wang, A note on the optimal degree of the weak gradient
of the stabilizer free weak Galerkin finite element method, Applied Numerical Mathematics, 150 (2020), 444-451.

\bibitem{bf}
F. Brezzi and M. Fortin, Mixed and Hybrid Finite Elements, Springer-Verlag, New York, 1991.

\bibitem{cr}
M. Crouzeix and P. Raviart,  Conforming and nonconforming
finite element methods for solving the stationary Stokes
equations, RAIRO Anal. Numer., 7 (1973), 33-76.

\bibitem{gr}
V. Girault and P. Raviart, Finite Element Methods for
the Navier-Stokes Equations: Theory and Algorithms, Springer-Verlag, Berlin, 1986.

\bibitem{cww}
W. Chen, F. Wang and Y. Wang, Weak Galerkin method for the coupled Darcy–Stokes flow,
IMA Journal of Numerical Analysis,  36 (2016), 897-921.

\bibitem{cfx}
G. Chen, M. Feng and X. Xie, Robust globally divergence-free weak Galerkin methods for Stokes equations, J. Comput. Math., 34 (2016), 549-572.

\bibitem{John}
V. John, A. Linke, C. Merdon, M. Neilan and L. Rebholz,
On the divergence constraint in mixed finite element methods for incompressible flows,
SIAM Review, 59 (2017),  492-544.

\bibitem{lllc}
R. Li, J. Li, X. Liu and Z. Chen, A weak Galerkin finite element method for a coupled Stokes‐Darcy problem,
Numerical Methods for Partial Differential Equations, 33 (2017), 111-127.

\bibitem{mwy}
 L. Mu, X. Wang and X. Ye, A modified weak Galerkin finite element method for the Stokes equations,J. of Comput. and Appl. Math., 275 (2015), 79-90.


\bibitem{sst}
D. Schotzau, C. Schwab and A. Toselli, Mixed $hp$-DGFEM for incompressible flows, SIAM J. Numer. Anal., 40 (2003), 2171-2194.


\bibitem{tzz}
T. Tian, Q. Zhai and R. Zhang, A new modified weak Galerkin finite element scheme for solving the stationary Stokes equations,
J. of Comput. and Appl. Math., 329 (2018), 268-279.


\bibitem{wwy}
 J. Wang, Y. Wang and X. Ye, A robust numerical method for Stokes equations based on divergence-free H(div) finite element methods, SIAM  J. Sci. Comput., 31 (2009), 2784-2802.

\bibitem{wyhdiv}
J. Wang and X. Ye, New finite element methods in computational fluid dynamics by H(div) elements, SIAM Numerical Analysis, 45 (2007), 1269-1286.

\bibitem{wy-stokes}
J. Wang and X. Ye, A weak Galerkin finite element method for the Stokes equations, Adv. in Comput. Math., 42 (2016), 155-174.


\bibitem{wy}
J. Wang and X. Ye, A weak Galerkin finite element method for second-order elliptic problems. J. Comput. Appl. Math., 241 (2013), 103-115.

\bibitem{wymix} J. Wang and X. Ye, A Weak Galerkin mixed finite element method for second-order elliptic problems, Math. Comp., 83 (2014), 2101-2126.


\end{thebibliography}
\end{document}